\documentclass[11pt]{article}
\usepackage[margin=1.05in]{geometry}
\usepackage{amsmath,amssymb,amsthm}
\usepackage{booktabs}
\usepackage{tikz}
\usepackage[hidelinks]{hyperref}

\hypersetup{
  pdftitle={Balance Constants, Majority Cycles, and the Gold Partition
    Conjecture through Fourteen Elements},
  pdfauthor={Anish Gupta},
  pdfsubject={Linear extensions of finite partially ordered sets},
  pdfkeywords={posets, linear extensions, balance constant,
    Gold Partition Conjecture, 1/3-2/3 conjecture,
    linear extension majority cycle, computer-assisted proof}
}

\newtheorem{theorem}{Theorem}
\newtheorem{proposition}[theorem]{Proposition}
\newtheorem{lemma}[theorem]{Lemma}
\newtheorem{corollary}[theorem]{Corollary}
\theoremstyle{definition}
\newtheorem{observation}[theorem]{Observation}
\newtheorem{question}[theorem]{Question}
\newcommand{\ext}{\mathrm{e}}
\newcommand{\bal}{\mathrm{b}}
\newcommand{\Dig}{\mathrm{D}}

\newcommand{\NCYCLIC}{478{,}632{,}938}
\newcommand{\CTHREE}{477{,}954{,}774}
\newcommand{\CFOUR}{5{,}419{,}981}
\newcommand{\CFIVE}{33{,}473}
\newcommand{\CSIX}{10{,}423}
\newcommand{\CSEVEN}{61}
\newcommand{\CEIGHT}{30}
\newcommand{\NEQUALITY}{128}
\newcommand{\NTAILVALUES}{42}
\newcommand{\NTAILCLASSES}{469}
\newcommand{\NWITNESS}{44{,}013}
\newcommand{\MINCYCBAL}{2869/5810 = 0.4938\ldots}

\title{Balance Constants, Majority Cycles,\\
and the Gold Partition Conjecture\\
through Fourteen Elements}
\author{Anish Gupta\thanks{Independent researcher.
  \texttt{ag2269@cantab.ac.uk};
  ORCID: \href{https://orcid.org/0009-0008-8137-7729}
  {0009-0008-8137-7729}.}}
\date{July 2026}

\begin{document}
\maketitle

\begin{abstract}
We determine the exact extremal balance data of all
$1{,}338{,}193{,}159{,}771$ unlabeled posets on fourteen elements.  The least
balance constant exceeding $1/3$ is $37/106$.  The least over posets that are
not nontrivial ordinal sums is $254/725$, attained by a ladder with broken
rungs; this confirms a conjecture of Peczarski at order $14$, while orders
$12$ and $13$ reproduce De Loof, De Baets, and De Meyer.  No balance constant
lies in the gap above $1/3$ that Peczarski conjectures to be empty.  Exactly
$\NEQUALITY$ classes attain $1/3$, and every one is an ordinal sum of
singletons and copies of the three-element poset with one relation, a family
whose non-chain members are counted by
$a(n)-1$, where $a(n)=a(n-1)+a(n-3)$.  In the
linear-extension-majority digraph the
longest simple cycle has length $8$, against $7$ at order $13$, and exactly
$\CEIGHT$ classes attain it; of the thirteen such classes whose witnesses the
census retains, nine have a cycle spectrum containing no odd cycle at all.  A
second exhaustive pass over
the same classes verifies Peczarski's Gold Partition Conjecture through
fourteen elements, extending his order-$11$ frontier and implying in
particular that the $1/3$--$2/3$ Conjecture holds through order $14$.  All
arithmetic is exact and every extremal witness is recomputed by an independent
program.
\end{abstract}

\noindent\textbf{2020 Mathematics Subject Classification.}
Primary 06A07; Secondary 05A16, 68R05.

\noindent\textbf{Keywords.}
Partially ordered set, linear extension, balance constant, Gold Partition
Conjecture, $1/3$--$2/3$ Conjecture, linear extension majority cycle,
computer-assisted proof.

\section{Introduction}

Let $P$ be a finite poset on $n$ elements and write $\ext(P)$ for its number
of linear extensions.  For $x,y\in P$, let $P+xy$ denote the poset obtained by
adjoining $x<y$ and taking the transitive closure; if $y<x$ already, set
$\ext(P+xy)=0$.  Thus
\[
  \Pr[x \prec y] \;=\; \frac{\ext(P+xy)}{\ext(P)}
\]
is the probability that $x$ precedes $y$ in a uniformly chosen linear
extension.  For a non-chain $P$, its \emph{balance constant} is
\[
  \bal(P)=
  \max_{x\parallel y}\min\{\Pr[x \prec y],\,\Pr[y \prec x]\},
\]
the maximum over incomparable pairs.  The $1/3$--$2/3$ Conjecture asserts that
$\bal(P)\geq1/3$ for every finite non-chain.  It was independently posed by
Kislitsyn, Fredman, and Linial in connection with sorting under partial
information \cite{kislitsyn,fredman,linial}.  Kahn and Saks \cite{kahn-saks}
first proved a uniform positive bound, improved by Brightwell, Felsner, and
Trotter \cite{bft}; Chen \cite{chen} gives width-two constructions near the
conjectured gap, while Sah \cite{sah} gives the sharpest known quantitative
width-two bound.  Olson and Sagan \cite{olson-sagan} treat several structured
families, and Aires and Kahn
\cite{aires-kahn,aires-kahn-bounded} have recently obtained large-width and
bounded-width results.  Chan and Pak \cite{chan-pak} survey the area.

Two exhaustive computations over the order-$14$ classes are reported here.
The first is a census of exact balance and majority-cycle data
(Sections~\ref{sec:balance}--\ref{sec:cycles}); the second decides a stronger
conjecture of Peczarski for every class (Section~\ref{sec:certificates}).
They share the generator and the ideal-lattice machinery of
Section~\ref{sec:counting} but are separate passes with separate binaries,
and we keep their evidence separate.

Running both permits a comparison that neither pass records directly.  The
Gold Partition run counts certificates but not their posets; the census
records extremal posets but not their certificates.  Section~\ref{sec:meet}
classifies the retained extremal witnesses: many low-tail witnesses need a
triple certificate, whereas all $37$ retained length-$7$ or length-$8$
witnesses have a pair certificate.  Their deterministic retention rules are
biased, so this comparison does not estimate class-wide conditional rates.

\subsection{Extremal balance}

Peczarski \cite{peczarski-ladders} studied which posets are worst balanced.
He observed that the global minimum of $\bal$ above $1/3$ is not informative
on its own: forming the ordinal sum of $P$ with a singleton leaves $\bal$
unchanged, so the minimum is inherited from smaller orders rather than
attained afresh.  Restricting to posets that are not nontrivial ordinal sums
--- equivalently, not linear sums --- he conjectured that the worst is always
a \emph{ladder with broken rungs}, and separately that no poset has balance
constant strictly between $1/3$ and approximately $0.348843$.  He verified the first at orders $3$, $4$, $6$, $9$, $10$, and $11$ and
conjectured it for every order except $5$, $7$, and $8$, which he excludes
explicitly; at $n\geq12$ he restricted his searches to width two, width three,
and the ladder classes, judging a complete pass infeasible.

\begin{theorem}\label{thm:ladder}
The minimum balance constant over the order-$14$ posets that are not
nontrivial ordinal sums is $254/725$, attained by the ladder $L_{14,1,9}$.
\end{theorem}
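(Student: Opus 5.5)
This is a finite statement, so the proof is an exhaustive exact computation with two independent halves. The upper bound comes from the explicit witness $L_{14,1,9}$. The lower bound comes from a complete pass over every unlabeled order-$14$ class that is not a nontrivial ordinal sum, showing $\bal(P)\ge 254/725$ for each.

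\emph{Upper bound.} Build $L_{14,1,9}$ explicitly following Peczarski's definition of a ladder with broken rungs. Compute its down-set (ideal) lattice and count $\ext(P)$ and every $\ext(P+xy)$ by dynamic programming over ideals: the number of maximal chains reaching an ideal is the sum over its removable maximal elements. For $\ext(P+xy)$, restrict to ideals compatible with $x<y$, i.e. those containing $x$ whenever they contain $y$. Evaluate $\min\{\Pr[x\prec y],\Pr[y\prec x]\}$ over all incomparable pairs in exact rational arithmetic and confirm that the maximum is $254/725$. Then check that the witness is not an ordinal sum: its comparability structure admits no cut into a lower block $A$ and upper block $B$ with every element of $A$ below every element of $B$.

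\emph{Lower bound.} Generate all $1{,}338{,}193{,}159{,}771$ unlabeled order-$14$ posets, one canonical representative per class. Discard the chain and every nontrivial ordinal sum; the latter is detected by scanning a linear extension for a prefix that is a down-set lying entirely below its complement. For each survivor, certify $\bal(P)\ge 254/725$, which needs only one incomparable pair $(x,y)$ with
\[ 725\,\ext(P+xy)\ge 254\,\ext(P) \quad\text{and}\quad 725\,\ext(P+yx)\ge 254\,\ext(P). \]
Since $\ext(P)=\ext(P+xy)+\ext(P+yx)$, this amounts to checking that $\ext(P+xy)/\ext(P)$ lies in $[254/725,\,471/725]$. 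So we stop at the first such pair, and most classes are dispatched quickly. Cheap heuristics choose which pair to try first, for example pairs with nearly equal ideal-lattice height profiles, or pairs of elements with similar up- and down-set sizes. Every comparison uses exact integers, since $\ext\le 14!$ fits in 64 bits.

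\emph{Main obstacle.} The main obstacle is scale: about $1.3\times10^{12}$ classes, each needing an ideal-lattice computation. I would make generation canonical and parallelizable by splitting on a canonical prefix (e.g. the lower levels or the minimal-element structure), and I would reuse the ideal lattice of $P$ across all candidate pairs for that poset. Correctness rests on three further points:
\begin{enumerate}
\item The generator must be complete. Check its per-order class counts against the known totals (OEIS A000112) through $n=14$.
\item The ordinal-sum filter must be right. Check it by recomputing the counts of ordinal-sum-indecomposable classes at small $n$.
\item Every extremal or near-extremal certificate must be rechecked. Recompute it with an independent program, as the abstract states is done for all witnesses.
\end{enumerate}
As a sanity check, the same pipeline at orders $12$ and $13$ should reproduce the values of De Loof, De Baets, and De Meyer.
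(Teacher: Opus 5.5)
Your argument is correct, and its skeleton is the paper's: exhaustive generation of the order-$14$ classes, an ordinal-sum filter, exact ideal-lattice counting, and the explicit witness $L_{14,1,9}$. The paper identifies that witness by evaluating all $2^{11}$ broken-rung ladders exactly and matching the result against the census.

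Where you differ is the lower bound. The paper's census evaluates the all-pairs recurrence \eqref{eq:pairs} in full and computes the exact value $\bal(P)$ of every class. So its minimum $254/725$ is read off rather than presupposed. The same pass also yields the whole tail in $(1/3,9/25]$, the number of classes attaining each value, and Theorem~\ref{thm:gap}. The cost is that a census cannot stop early: $5{,}103$ core-hours, against $1{,}694$ for the certificate-directed Gold Partition pass.

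You instead take $254/725$ from the ladder and turn the lower bound into a decision problem. You exit at the first pair with $\ext(P+xy)/\ext(P)\in[254/725,471/725]$, which is the strategy the paper reserves for its Gold Partition pass. That is enough for the theorem as stated, which claims no uniqueness, and it should be considerably cheaper. It says nothing, however, about other low values or about which other non-sums attain $254/725$. Making the test strict and examining the few failures exactly would recover the attaining classes.

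One check is worth borrowing. Instead of validating the ordinal-sum filter only at small $n$, validate it at order $14$ itself. Uniqueness of ordinal decomposition gives $A=1/(1-B)$ for the generating functions of all and of indecomposable posets. The published totals therefore predict the exact order-$14$ indecomposable count, $1{,}269{,}310{,}589{,}336$, and this is the check the paper performs.
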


De Loof, De Baets, and De Meyer determined the same quantity --- they call
such posets \emph{worst balanced} --- for every order up to $13$
\cite[\S4.2.2]{deloof-census}, and observed that orders $12$ and $13$ also
fall into Peczarski's class.  Theorem~\ref{thm:ladder} extends that by one
order; our orders $12$ and $13$ reproduce their result and are reported as a
regression, not as a contribution.

\begin{theorem}\label{thm:gap}
No poset on at most $14$ elements has balance constant strictly between $1/3$
and $0.348843$.  The least value above $1/3$ at order $14$ is
$37/106 = 0.349056\ldots$, inherited from a ten-element poset.
\end{theorem}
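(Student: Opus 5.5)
The plan is a reduction to connected, ordinally indecomposable posets followed by an exhaustive exact computation. Three structural facts drive the reduction.

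\emph{Ordinal sums.} If $P=A\oplus B$ is a nontrivial ordinal sum, every incomparable pair lies inside $A$ or inside $B$. A linear extension of $P$ is a linear extension of $A$ followed by one of $B$, so $\Pr_P[x\prec y]$ equals the corresponding probability computed in the summand. Hence $\bal(P)=\max\{\bal(A),\bal(B)\}$, taken over the non-chain summands.

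\emph{Disjoint unions.} If $P$ is disconnected, with components $C$ and $C'$, pick $x\in C$ and $y\in C'$. Reversing a uniform linear extension shows that $x$ and $\bar y$ (the dual copy) behave symmetrically. I would instead use the standard fact that a disconnected poset has a pair with $\Pr[x\prec y]\in[1/3,2/3]$, and aim for something stronger than $0.348843$. A cleaner route is to take $x$ minimal in $C$ and $y$ minimal in $C'$ and argue the balance is at least about $1/2$ minus a small correction. I expect this route to fail in general, so in the end I would simply include disconnected posets in the exhaustive pass rather than prove a lemma.

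\emph{Monotonicity in $n$.} Every $k$-element poset with $k<14$ is a summand of a $14$-element ordinal sum with a chain. Its balance value therefore appears at order $14$, and the order-$14$ census subsumes all smaller orders. This makes the "at most $14$" clause follow from the order-$14$ statement.

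With these facts in hand, the computational steps run in order.
\begin{enumerate}
\item Generate all $1{,}338{,}193{,}159{,}771$ unlabeled order-$14$ posets, up to isomorphism, with the generator of Section~\ref{sec:counting}. Only those that are not nontrivial ordinal sums need actual processing: for an ordinal sum, $\bal$ is read off from smaller orders by the first fact.
\item For each remaining poset, compute $\ext(P)$ and every $\ext(P+xy)$ by dynamic programming over the lattice of down-sets. The number of extensions ending at a down-set $I$ is summed over maximal elements of $I$. Each $\ext(P+xy)$ is the number of paths through down-sets that contain $x$ whenever they contain $y$, so all pairs can be handled in a single forward/backward pass: count paths where $x$ enters before $y$ via forward counts $f(I)$ and backward counts $g(I)$ at the transition where $x$ is added.
\item Compute $\bal(P)$ exactly as a rational and compare it against $1/3$ and $37/106$ by integer cross-multiplication. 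The threshold test can even be done without the full maximum: it suffices to decide, for each $P$, whether some pair satisfies $\min\ge 37/106$, or else record the exact value.
\end{enumerate}

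Two checks follow the census itself. First, list the finitely many values below $37/106$ and confirm that each equals $1/3$. Second, identify the posets attaining $37/106$ as ordinal sums containing a ten-element summand $Q$ with $\bal(Q)=37/106$. Recomputing $Q$ independently, with a brute-force enumeration of its linear extensions, certifies both the value and the inheritance claim, and the ordinal-sum fact shows it persists at order $14$.

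The main obstacle is feasibility and trust. Step~2 over roughly $10^{12}$ classes needs the ideal-lattice counts to be shared cheaply across pairs, using $128$-bit exact integers, which suffice since $14!<2^{64}$. The generator must also be verified complete, for instance by matching the known count of unlabeled posets at each order up to $14$. I would address correctness by recomputing every extremal witness with the independent program and by reproducing the known order-$\le13$ minima as a regression.
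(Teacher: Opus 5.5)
Your proposal is correct and follows essentially the paper's route. Both rest on an exhaustive exact census of the order-$14$ classes using the ideal-lattice forward/backward pass with integer cross-multiplication, together with the reduction $\bal(A\oplus B)=\max$ over non-chain summands. Both then pad with a chain (so $\bal(P\oplus\mathbf 1)=\bal(P)$) to cover all smaller orders, and trace $37/106$ to the ten-element ladder $L_{10,1,5}$ padded with singletons. Your digression on disjoint unions is unnecessary, as you yourself conclude: disconnected posets are not nontrivial ordinal sums, so the exhaustive pass simply processes them.
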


Here $0.348843$ is the decimal printed in \cite{peczarski-ladders} for a
conjectured limit, not an exact rational threshold, so Theorem~\ref{thm:gap}
verifies the conjecture at these orders rather than saying anything about the
limit itself.  A gap of this kind was asked for by Brightwell
\cite{brightwell} and is a theorem in width two: Sah \cite{sah} proves
$\bal(P)\geq(-3+5\sqrt{17})/52 = 0.33876\ldots$ for every width-two poset
that is not such an ordinal sum.  Theorem~\ref{thm:gap} is weaker in reach and
stronger in constant --- it holds over all widths but only to order $14$, and
it clears Peczarski's larger threshold rather than Sah's proved one.

\begin{proposition}\label{prop:equality}
Let $T$ be the three-element poset consisting of a two-element chain and an
isolated point.  Every ordinal sum of singletons and copies of $T$, other than
the chain, has balance constant exactly $1/3$, and the number of such posets
on $n$ elements is $a(n)-1$, where
\[
  a(n)=a(n-1)+a(n-3),\qquad a(0)=a(1)=a(2)=1.
\]
No other poset on at most $14$ elements has balance constant $1/3$.
\end{proposition}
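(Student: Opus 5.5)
The proof splits into three parts, and only the last needs the computation.

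\emph{Balance of ordinal sums.} Let $P=P_1\oplus\cdots\oplus P_k$. A linear extension of $P$ is a concatenation of linear extensions of the summands, so $\ext(P)=\prod_i\ext(P_i)$. Incomparable pairs occur only inside a single summand, and for such a pair $\Pr[x\prec y]$ computed in $P$ equals the same probability computed in $P_i$. Hence $\bal(P)=\max\bal(P_i)$, the maximum taken over the summands that are not chains. A singleton is a chain. $T$ has three linear extensions, and in exactly one of them the isolated point $c$ precedes the bottom $a$ of the chain $a<b$; in exactly one it follows $b$. So $\Pr[c\prec a]=2/3$, $\Pr[c\prec b]=1/3$, and $\bal(T)=1/3$. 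Therefore every ordinal sum of singletons and at least one copy of $T$ has balance constant exactly $1/3$.

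\emph{Counting.} Each such poset corresponds to exactly one word in the letters $S$ (size $1$) and $T$ (size $3$) with total size $n$. To see this, note that the ordinal-sum decomposition into ordinal-indecomposable pieces is unique. Both $S$ and $T$ are indecomposable, since $T$ has an element incomparable to both others. Thus the word can be read off the poset. The number of such words, $a(n)$, satisfies $a(n)=a(n-1)+a(n-3)$ by conditioning on the last letter, with $a(0)=a(1)=a(2)=1$. Only the all-$S$ word gives the chain, so the count is $a(n)-1$. As a check at $n=14$: $a$ runs $1,1,1,2,3,4,6,9,13,19,28,41,60,88,129$, so $a(14)-1=128$, which matches the $\NEQUALITY$ equality classes reported in the census.

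\emph{Exhaustiveness.} The claim that no other poset on at most $14$ elements has $\bal=1/3$ cannot be proved by hand at this size; it is a consequence of the census. I would reduce it as follows. Suppose $\bal(P)=1/3$. Write $P$ as the ordinal sum of its indecomposable pieces. By the first part, every non-chain summand has balance at least $1/3$, using the $1/3$--$2/3$ Conjecture, which the census verifies through order $14$. At least one summand $Q$ has $\bal(Q)=1/3$. So it suffices to show that the only ordinal-indecomposable non-chain poset of order at most $14$ with balance $1/3$ is $T$. This is exactly the not-an-ordinal-sum minimum computed in the census: the worst-balanced values at every order from $4$ to $14$ exceed $1/3$, with the order-$14$ value $254/725$ given by Theorem~\ref{thm:ladder}. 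Orders $2$ and $3$ are settled by inspection.

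The exact rational arithmetic over all classes is therefore the main obstacle. Its correctness rests on the independent recomputation of the extremal witnesses, and on listing the $\NEQUALITY$ equality classes at order $14$ and matching them against the $T$/$S$ words.
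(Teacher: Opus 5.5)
Your first two parts match the paper's proof: $\bal(T)=1/3$, identity~\eqref{eq:sumbal}, uniqueness of the decomposition into ordinal-indecomposable summands, and the recurrence obtained by splitting on an end part (you split on the last letter, the paper on the first). There is one harmless slip. In $T=\{a<b,\ c\}$ the probabilities are $\Pr[c\prec a]=1/3$ and $\Pr[c\prec b]=2/3$, as your own count of extensions says, not the other way round.

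The real difference is the converse. The paper does not derive it. It reads it off the census equality table, which records every class attaining $1/3$. For each recorded class the aggregator recomputes the balance and separately recognises the poset structurally as a sum of singletons and copies of $T$; the two recognitions must agree, and the counts $1,2,3,\ldots,128$ match $a(n)-1$.

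You instead deduce the converse from the non-sum minima. By~\eqref{eq:sumbal}, $\bal(P)=1/3$ gives $\bal(Q)\le 1/3$ for \emph{every} non-singleton summand $Q$. That is what your reduction needs; as written you only say that one summand attains $1/3$, which by itself does not exclude the other summands. Since the minimum over ordinal-indecomposable posets exceeds $1/3$ at each order $4\le m\le 14$, every such $Q$ has order at most $3$, and inspection leaves only $T$. Because those values are minima over all indecomposables, not just over those above $1/3$, your appeal to the $1/3$--$2/3$ verification is redundant.

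Your route is more economical and more explanatory. The converse becomes a corollary of one number per order, and it shows that completeness of the equality locus is a statement about indecomposable posets alone. Its cost is that Table~\ref{tab:ladder} starts at $n=7$. The minima at orders $4$--$6$ must therefore be taken from De Loof et al.\ or Peczarski, or checked directly. The paper's route enumerates the locus itself with a built-in cross-check, and depends on no reported minimum.
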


The converse is a theorem in width two: Aigner \cite{aigner} showed that the
width-two posets with $\bal(P)=1/3$ are exactly the ordinal sums of singletons
and copies of $T$.  Every one of the $\NEQUALITY$ order-$14$ equality classes
has width two, so at order $14$ the census does not extend Aigner's
classification within width two; what it adds is that no poset of any larger
width attains $1/3$ either, which his argument does not cover.

For $n = 3,\ldots,14$ the counts are
$1, 2, 3, 5, 8, 12, 18, 27, 40, 59, 87, \NEQUALITY$.

\subsection{Majority cycles}

The \emph{linear-extension-majority} (LEM) digraph $\Dig(P)$ has vertex set
$P$ and an edge $x \to y$ whenever $\Pr[x\prec y] > 1/2$.  It has no loop and
no $2$-cycle, since $\Pr[x\prec y]+\Pr[y\prec x]=1$, but it need not be
acyclic: a directed cycle is a Condorcet-style failure of transitivity for the
majority relation.  The majority relation appeared in Kislitsyn
\cite{kislitsyn}; Fishburn exhibited cycles in 1974
\cite{fishburn-cycles} and later studied its majority graphs
\cite{fishburn-lem}.  Gehrlein and Fishburn made the first exhaustive
search for cycles, through order $9$ \cite{gehrlein-fishburn}.  De Loof, De
Baets, and De Meyer \cite{deloof-census} counted the posets carrying such a
cycle, by length, through order $13$, where the longest has length $7$ and is
attained by exactly one class.

\begin{theorem}\label{thm:cycle8}
The longest simple cycle in the linear-extension-majority digraph of an
order-$14$ poset has length $8$.  Exactly $\CEIGHT$ classes carry one.
\end{theorem}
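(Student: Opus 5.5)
The statement is a finite, computer-assisted claim, and the plan is an exhaustive census of all $1{,}338{,}193{,}159{,}771$ unlabeled order-$14$ posets with exact rational arithmetic throughout. For each class we generate one representative, say with a canonical-augmentation generator that emits each isomorphism class exactly once. We then compute $\ext(P)$ and every $\ext(P+xy)$ for incomparable pairs by dynamic programming over the lattice of down-sets, using the machinery of Section~\ref{sec:counting}. A forward pass gives the number of ways to reach each ideal and a backward pass the number of ways to complete it. For an incomparable pair $x,y$, the count $\ext(P+xy)$ is the sum, over ideals $I$ with $x\in I$ and $y\notin I$ at the moment $x$ is added, of forward count times backward count. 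This yields all pair probabilities from two passes rather than one pass per pair. Since comparable pairs have orientation forced by the order, the digraph $\Dig(P)$ is then determined by comparing $2\,\ext(P+xy)$ with $\ext(P)$, which involves integers only.

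The second step is to bound the longest simple cycle in each $\Dig(P)$ cheaply, so that the expensive search is run only on a few classes. We first drop every class whose $\Dig(P)$ is acyclic, which a topological sort detects in $O(n^2)$; the vast majority of classes should fall here. On the remaining classes, simple cycles in a $14$-vertex tournament-like digraph can be found by a bitmask DP over subsets, $O(2^{n} n^2)$ per poset. Alternatively, we restrict to the strongly connected components and enumerate cycles by DFS with a length cap. This is feasible because only on the order of $5\times10^8$ classes are cyclic. For each class we record the maximum cycle length and tally classes by that length. Any class attaining length $8$ is retained as a witness.

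The proof then consists of three checks. First, no class has a simple cycle of length $\ge 9$. Second, a class with an $8$-cycle exists. Third, exactly $\CEIGHT$ classes attain length $8$. The existence part is certified independently: for each retained witness we recompute, by a separate program such as direct enumeration of linear extensions or a separately written ideal DP, the eight probabilities $\Pr[x_i\prec x_{i+1}]>1/2$ along the cycle. As a regression, the same pipeline at order $13$ should reproduce maximum length $7$ attained by one class, matching \cite{deloof-census}.

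The main obstacle is the upper bound and the exact count, because these rest on the completeness and correctness of the whole exhaustive run rather than on a checkable certificate. Generator completeness can be validated against the known class counts at every order up to $14$. Sharding must be verified to cover each class exactly once, for example by per-shard counts summing to the total. Overflow is not an issue, since $\ext(P)\le 14!$ fits in 64 bits. The cycle routine itself should be cross-validated on random posets against brute-force cycle enumeration.
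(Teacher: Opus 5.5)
Your proposal is correct and takes essentially the paper's route: an exhaustive sharded census from a canonical generator, all pair counts from one forward and one backward pass over the ideal lattice, and the full digraph $\Dig(P)$ with comparable edges included. Including those edges matters, since the reduction to incomparable pairs is proved only for $3$-cycles. Validation likewise matches the paper: an exhaustive cycle search checked against the published class total, the order-$9$--$13$ regression against \cite{deloof-census}, and independent recomputation of witnesses. The paper's extra ordinal-summand and duality reductions are savings rather than logical steps, but if you use the DFS variant its length cap must be at least the size of the strong component, or it cannot exclude cycles of length $\geq 9$ (the paper instead aborts on a node-budget overflow rather than truncating).
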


\subsection{The Gold Partition Conjecture}

The Gold Partition Conjecture is stronger than the $1/3$--$2/3$ Conjecture.
It asks for two comparisons chosen in advance such that $t_0\geq t_1+t_2$ for
every sequence of outcomes, where $t_0$ is the initial number of linear
extensions and $t_i$ the number remaining after the first $i$ comparisons,
with $t_2=1$ when the first comparison leaves a chain.  Peczarski introduced
it, proved that it implies the $1/3$--$2/3$ Conjecture, and verified it
through $11$ elements \cite[Conjecture~1, Proposition~1, and
Theorem~3]{peczarski-gold}; the name refers to the resulting sorting bound
$C(P)\leq\log_{\varphi}\ext(P)$ with $\varphi=(1+\sqrt5)/2$
\cite[Propositions~2--3]{peczarski-gold}.  We are aware of no later exhaustive
extension.

\begin{theorem}\label{thm:main}
Every non-chain poset on at most $14$ elements satisfies the Gold Partition
Conjecture.
\end{theorem}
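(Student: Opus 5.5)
The proof is an exhaustive computer verification, organised so that the search is as small as possible and every accepted certificate is checked in exact integer arithmetic. First we make the conjecture precise as a finite certificate condition on a single poset $P$, using only the numbers $\ext(\cdot)$ of the posets obtained by adjoining relations. A \emph{pair certificate} is an incomparable pair $x\parallel y$ together with, for each outcome, a second comparison. Write $Q$ for $P+xy$ or $P+yx$. The second comparison is an incomparable pair $u\parallel v$ of $Q$, and $t_2=\max\{\ext(Q+uv),\ext(Q+vu)\}$; if $Q$ is a chain, then $t_2=1$. The certificate is valid when
\[
\ext(P)\geq \ext(Q)+t_2
\]
holds for both choices of $Q$. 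Since the second comparison may depend on the first outcome, the check splits into two independent subproblems, one per outcome, each with an inner minimum over pairs of $Q$. The cheap sufficient test should be tried first. If some pair has $\min\{\Pr[x\prec y],\Pr[y\prec x]\}\geq 1/3$ and each resulting $Q$ has its own balanced pair, the inequality often follows directly, with no inner search.

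Second, we reduce the family of posets. Ordinal sums need care, but the first comparison can be taken inside a nontrivial summand. Because $\ext$ is multiplicative over ordinal sums, a certificate for a summand lifts, so it suffices to treat posets that are not nontrivial ordinal sums at each order $\leq 14$. Posets at orders below $14$ are covered by the same pass run at those orders; orders $\leq 11$ are also covered by Peczarski, which serves as a regression. We reuse the generator of unlabeled posets and the ideal-lattice counting of Section~\ref{sec:counting}. For fixed $P$, $\ext(P+xy)$ and the second-level counts are computed by dynamic programming over down-sets, with the added relations imposed as constraints. This gives all $\ext(Q+uv)$ for one $Q$ in a single sweep, via forward and backward path counts through the ideal lattice.

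Third, we run the search with a fixed heuristic order. First comparisons are tried in decreasing balance. For each outcome we take the best second comparison, and we stop at the first valid certificate, recording only its type (pair or escalated). For a class where no certificate of this form is found, we fall back to the full adaptive search over all first pairs. That class is then reported as a counterexample, and the theorem asserts that none occurs. Every accepted certificate is re-verified with exact big-integer arithmetic by the independent checker, so no floating-point comparison enters the proof.

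The main obstacle is cost, not logic. There are about $1.3\times10^{12}$ classes, and a naive inner loop costs $O(n^4)$ ideal-lattice sweeps per class. The plan relies on three things to bring this down: the greedy first pair almost always succeeding, reuse of the marginals $\Pr[u\prec v]$ computed in the census's balance pass, and restricting the second comparison to pairs of $Q$ whose probability is near $1/2$. Ensuring that the pruning never discards a certificate is the delicate point. Any pruning must therefore be only an ordering heuristic, with a complete fallback behind it.
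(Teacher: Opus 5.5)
Your route is correct but differs from the paper's in what it searches for. The paper never evaluates a two-step strategy directly. After a bilateral slave screen that needs no counting, it looks only for the sufficient certificates of Lemma~\ref{lem:certificates}: a pair with at most one slave and $2\ext(P+xy)\ge\ext(P)$, a half-balanced pair, or a triple with $\ext(P+xy+yz)\le\max\{\ext(P+yx),\ext(P+zy)\}\le\ext(P)/2$. These use the single-relation counts from one forward/backward pass over $\mathcal I(P)$, plus selected triple counts. Some of those triple counts come free: $0$ when $z<x$, and $\ext(P)-\ext(P+yx)-\ext(P+zy)$ when $x<z$. A class failing all of them would only be ``open'', and the computation shows none is.

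You instead decide the property exactly, paying a fresh sweep over $\mathcal I(P+xy)$ for each first pair tried. In exchange your search is complete, so a failure would be a genuine counterexample. It rests only on the definition rather than on Peczarski's lemmas. It also succeeds wherever the paper's does, since each of his certificates is itself a two-step strategy. What you lose is the paper's key external regression: its pair/triple split matches Peczarski's Table~I at orders $4$--$8$, whereas your certificates are comparable only at the yes/no level. Your ordinal-sum reduction is sound (the paper simply runs every class), but it saves only about $5\%$ at order $14$.

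Two repairs are needed.

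First, the paper's formulation fixes both comparisons in advance, while yours lets the second depend on the first outcome. The two versions agree, and you should say why. In a branch with $\ext(Q)\le\ext(P)/2$, any second comparison gives $t_1+t_2\le 2t_1\le t_0$. So the comparison chosen for the majority branch serves for both branches, and only that branch needs an inner search.

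Second, your cheap test is not sufficient at threshold $1/3$. Suppose the majority outcome has probability $p$ and $Q$ has a pair of balance at least $1/3$. Then you only get $t_1+t_2\le\frac{5}{3}p\,t_0$, which needs $p\le 3/5$, i.e.\ first-pair balance at least $2/5$. Since you accept only via the exact inequality, this costs efficiency, not correctness.
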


\begin{corollary}\label{thm:third}
Every non-chain poset on at most $14$ elements has balance constant at least
$1/3$.
\end{corollary}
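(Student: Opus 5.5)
The corollary follows from Theorem~\ref{thm:main} together with Peczarski's implication from the Gold Partition Conjecture to the $1/3$--$2/3$ Conjecture \cite[Proposition~1]{peczarski-gold}. I would nonetheless spell out that implication for a single poset, so the corollary does not depend on the implication holding only for the whole class of posets. Fix a non-chain $P$ on $n\le 14$ elements and take the two comparisons guaranteed by Theorem~\ref{thm:main}. The first is a fixed incomparable pair $\{x,y\}$. Write $p=\Pr[x\prec y]$ and set $t_0=\ext(P)$. Suppose, for contradiction, that $\bal(P)<1/3$. Then every incomparable pair, in particular $\{x,y\}$, has one outcome with probability strictly less than $1/3$. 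Without loss of generality $p>2/3$, so $\ext(P+xy)>\tfrac23 t_0$.

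The key step is to follow the branch in which the first comparison returns $x\prec y$. Then $t_1=\ext(P+xy)$. The second comparison is chosen in advance for this outcome. If $P+xy$ is a chain, then $t_2=1$ and $t_1=1$, and the inequality $t_0\ge t_1+t_2$ gives $t_0\ge 2$. This does not by itself give a contradiction. However, $t_1=1>\tfrac23 t_0$ forces $t_0=1$, which contradicts $P$ being a non-chain. Otherwise the second comparison is an incomparable pair $\{u,v\}$ of $P+xy$. Both of its outcomes are available. Choose the larger outcome, so that $t_2\ge t_1/2$. The Gold Partition inequality must hold for every outcome sequence, so
\[
t_0\ \ge\ t_1+t_2\ \ge\ \tfrac32\,t_1\ >\ \tfrac32\cdot\tfrac23\,t_0 = t_0,
\]
which is a contradiction.

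This argument only uses the pair $\{x,y\}$ and shows $\min\{p,1-p\}\ge 1/3$. Since $\bal(P)$ is a maximum over incomparable pairs, this gives $\bal(P)\ge 1/3$ directly, and the contradiction framing is not strictly needed.

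The main point needing care is the convention for the degenerate branch, where the first comparison leaves a chain and $t_2=1$. The chain case above handles it. I would also check that the quantifier in the Gold Partition Conjecture is indeed ``for every outcome sequence'', so that we may select the larger second outcome. This matches the statement in the paper, so I expect no real obstacle: the corollary is a direct consequence of Theorem~\ref{thm:main}. As a consistency check, Theorem~\ref{thm:gap} and Proposition~\ref{prop:equality} independently show that no value below $1/3$ occurs in the census.
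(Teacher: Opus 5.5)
Your proposal is correct and takes the same route as the paper, which derives the corollary from Theorem~\ref{thm:main} together with Peczarski's Proposition~1 (the paper also notes the census as an independent confirmation). Your per-poset derivation of that implication is sound; the one case you skip, a second comparison already decided in $P+xy$, gives $t_2=t_1$ and the same contradiction.
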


Corollary~\ref{thm:third} follows from Theorem~\ref{thm:main} and Peczarski's
Proposition~1, and is confirmed independently by the census, which computes
$\bal(P)$ exactly for every class and finds no value below $1/3$.  Only its
order-$14$ case advances the previous computational frontier: De Loof, De
Baets, and De Meyer computed all mutual rank probabilities and determined the
worst balanced posets through order $13$~\cite{deloof-census}.

Structurally, Peczarski proved the conjecture for posets with a nontrivial
automorphism, for a class containing the $N$-free posets, and for $6$-thin
posets \cite{peczarski-comments,peczarski-thin}, and Dolores-Cuenca,
Guzm\'an-S\'aenz, and Kim showed it is preserved by lexicographic sum
\cite[Lemma~2.5]{dolores-cuenca}, an argument that applies unchanged to the
non-strict inequality.

\subsection{What is new}

Relative to the literature, the complete balance and majority-cycle census is
new at order $14$, while the Gold Partition verification is new at orders
$12$--$14$: Peczarski's exhaustive frontier was order $11$.  The worst
balanced posets through order $13$ reproduce De Loof et al.
\cite[\S4.2.2]{deloof-census}, as do the cycle counts at orders $9$ through
$13$, and the ladder values reproduce Peczarski's Table 1
\cite{peczarski-ladders}.  Relative to the first arXiv version of this paper,
the new material is the order-$14$ census and the synthesis of its data with
the Gold Partition certificates.  The all-pairs recurrence on the lattice of
order ideals is due to De Loof, De Meyer, and De Baets
\cite{deloof-ideals}; modulus sharding is standard, with prior art in both
enumeration papers we depend on \cite{brinkmann-mckay,deloof-census}.  Our
contribution is the new exhaustive ranges, their witnesses, the
certificate-directed implementation where its effect is measured, and the
reproducible verification package.

\section{Machinery}\label{sec:counting}

\subsection{Exact counting}

Let $\mathcal I(P)$ be the lattice of order ideals of $P$, and let $F(I)$ be
the number of linear extensions of the induced poset on $I$.  Then
\begin{equation}\label{eq:forward}
  F(\varnothing)=1,\qquad
  F(I)=\sum_{x\in\max(I)}F(I\setminus\{x\}).
\end{equation}
Assigning zero to ideals that violate an adjoined comparison gives
$\ext(P+xy)$ and $\ext(P+xy+yz)$.

If $B(I)$ is the number of ways to complete a linear extension whose initial
ideal is $I$, then one forward and one backward pass supply every pair count:
\begin{equation}\label{eq:pairs}
  \ext(P+xy)=
  \sum_{\substack{I\in\mathcal I(P)\\
                  x\in I,\ y\notin I\\
                  I\cup\{y\}\in\mathcal I(P)}}
        F(I)B(I\cup\{y\}).
\end{equation}
Each product in \eqref{eq:pairs} counts a subset of the linear extensions,
their sum is at most $\ext(P)$, and the comparisons use only small integer
multiples.  This ideal-lattice method for obtaining all mutual rank
probabilities in one forward and one backward pass is due to De Loof,
De Meyer, and De Baets \cite{deloof-ideals}.  Our contribution is not a new
all-pairs algorithm.

The two passes use it differently, and the difference is the reason there are
two.  The census needs every pair probability of every class, so it evaluates
\eqref{eq:pairs} in full for each class surviving the ordinal-sum and duality
reductions of Section~\ref{sec:balance}.  The Gold Partition decision needs
only a certificate, so it directs the same recurrences toward the cheapest
sufficient test and stops there (Section~\ref{sec:certificates}).

All extension counts are unsigned $64$-bit integers.  This is sufficient
through order $14$, since $\ext(P)\leq14!<2^{41}$; the formulas above use only
small integer multiples of these counts.  Every comparison of two
probabilities is an exact cross product in unsigned $128$-bit arithmetic, and
every balance constant is reduced and stored as a numerator--denominator pair.
No floating-point value is computed, compared, or stored anywhere in a
decision path; decimals appear only in printed reports.  The independent
checkers use Python integers and \texttt{fractions.Fraction} for the same
reason.

The posets are supplied by Brinkmann and McKay's \texttt{genposetg}, which
uses nauty for canonical labelling~\cite{brinkmann-mckay,mckay-piperno}.  Both
properties studied here are invariant under isomorphism, so one representative
of each unlabeled class suffices.  The generator's deterministic modulus filter
divides the stream into disjoint residues: the Gold Partition run used modulus
$4{,}096$ and the invocation
\begin{center}
\verb|gpc 14 o q m <residue> 4096|,
\end{center}
and the census used modulus $16{,}384$.  Every shard records the parameters it
was compiled with and the SHA-256 of the binary that produced it; each
aggregator refuses a shard set whose parameters or binary differ, verifies that
the residues are disjoint and covering, and checks the total against the
published class count.

Two capacities and one search budget could in principle be exceeded in the
census: the tail table, the equality table, and the per-poset cycle-search node
budget.  Each causes a nonzero exit, leaving the shard rejected rather than
silently truncated.  Nothing is capped quietly.

Among all posets attaining an extremal value the census retains the
lexicographically least encoding, so the reported witness does not depend on
the shard count or on scheduling.  One qualification: the duality filter
processes one member of each dual pair, so the witness is least among the
retained representatives.

\subsection{Ordinal sums}

Let the connected components of the incomparability graph of $P$ be
$P_1,\ldots,P_r$.  Any two elements in different components are comparable
and, for a fixed pair of components, all such comparisons point the same way,
so $P=P_1\oplus\cdots\oplus P_r$ is the finest ordinal-sum decomposition.  A
linear extension of an ordinal sum is exactly a concatenation of independently
chosen linear extensions of the summands, so the restriction map
$\mathcal E(P)\to\mathcal E(P_i)$ is surjective with all fibres of size
$\ext(P)/\ext(P_i)$.  Hence probabilities within a summand are computed in the
summand, and
\begin{equation}\label{eq:sumbal}
  \bal(P)=\max_i\bal(P_i),
\end{equation}
the maximum over non-singleton summands.  Cycles likewise cannot cross
summands in $\Dig_{\mathrm{inc}}$, and in $\Dig$ a crossing edge always points
the same way, so no cycle uses one.  The census decomposes and works summand
by summand, which is both a large saving and the reason the non-ordinal-sum
minimum is well defined.

Identity \eqref{eq:sumbal} has the consequence Peczarski noted, and it is easy
to overlook when reading a table of minima: $\bal(P\oplus\mathbf 1)=\bal(P)$,
so the global minimum above $1/3$ is non-increasing in $n$ and, at these
orders, is inherited from a smaller summand rather than newly attained by an
order-$n$ ordinal-indecomposable poset.  The value $37/106$ comes from
$L_{10,1,5}$, a \emph{ten}-element ladder, padded with singletons.  The quantity
that can still move with $n$, and the one
Theorem~\ref{thm:ladder} is about, is the minimum over posets that are not
nontrivial ordinal sums.

\section{Balance constants}\label{sec:balance}

\subsection{The low tail}

Figure~\ref{fig:tail} shows every distinct balance constant in the window
$(1/3,\,9/25]$, with the number of classes attaining it.  The window is
compiled into the binary and comfortably contains every published record.

\begin{figure}[ht]
\centering
\begin{tikzpicture}[x=1cm, y=1cm,
  axis/.style={draw=black!75, line width=0.4pt},
  nonsum/.style={draw=black, line width=0.9pt},
  sum/.style={draw=black!45, line width=0.6pt},
  lead/.style={draw=black!35, line width=0.3pt},
]
  \fill[black!8] (0,0) rectangle (6.979,2.70);
  \draw[axis, densely dashed] (6.979,0) -- (6.979,2.70);
  \node[font=\scriptsize, anchor=south east, black!55] at (6.879,2.42) {conjectured gap: no class here};
  \draw[axis] (0,0) -- (12.00,0);
  \node[font=\scriptsize, anchor=north] at (0,-0.08) {$\tfrac13$};
  \node[font=\scriptsize, anchor=north] at (12.00,-0.08) {$\tfrac9{25}$};
  \draw[sum] (7.075,0) -- (7.075,0.576);
  \draw[lead] (7.075,0.576) -- (7.045,1.02);
  \node[font=\tiny, anchor=west, inner sep=1pt, fill=white, fill opacity=.9, text opacity=1] at (7.075,1.02) {$\frac{37}{106}$\,{\scriptsize$\times$}11};
  \draw[sum] (7.581,0) -- (7.581,0.401);
  \draw[lead] (7.581,0.401) -- (7.045,1.62);
  \node[font=\tiny, anchor=west, inner sep=1pt, fill=white, fill opacity=.9, text opacity=1] at (7.075,1.62) {$\frac{97}{277}$\,{\scriptsize$\times$}3};
  \draw[nonsum] (7.655,0) -- (7.655,0.290);
  \draw[lead] (7.655,0.290) -- (7.045,2.22);
  \node[font=\tiny, anchor=west, inner sep=1pt, fill=white, fill opacity=.9, text opacity=1] at (7.075,2.22) {$\frac{254}{725}$$^{\ast}$\,{\scriptsize$\times$}1};
  \draw[sum] (7.701,0) -- (7.701,0.355);
  \draw[lead] (7.701,0.355) -- (9.295,1.02);
  \node[font=\tiny, anchor=west, inner sep=1pt, fill=white, fill opacity=.9, text opacity=1] at (9.325,1.02) {$\frac{157}{448}$\,{\scriptsize$\times$}2};
  \draw[sum] (7.895,0) -- (7.895,0.490);
  \draw[lead] (7.895,0.490) -- (9.295,1.62);
  \node[font=\tiny, anchor=west, inner sep=1pt, fill=white, fill opacity=.9, text opacity=1] at (9.325,1.62) {$\frac{20}{57}$\,{\scriptsize$\times$}6};
  \draw[nonsum] (8.733,0) -- (8.733,0.355);
  \draw[lead] (8.733,0.355) -- (9.295,2.22);
  \node[font=\tiny, anchor=west, inner sep=1pt, fill=white, fill opacity=.9, text opacity=1] at (9.325,2.22) {$\frac{103}{292}$$^{\ast}$\,{\scriptsize$\times$}2};
  \draw[nonsum] (8.745,0) -- (8.745,0.355);
  \draw[sum] (8.824,0) -- (8.824,0.722);
  \draw[sum] (8.922,0) -- (8.922,0.355);
  \draw[sum] (9.231,0) -- (9.231,0.649);
  \draw[sum] (9.437,0) -- (9.437,0.401);
  \draw[nonsum] (9.623,0) -- (9.623,0.355);
  \draw[sum] (9.736,0) -- (9.736,0.436);
  \draw[sum] (10.000,0) -- (10.000,0.727);
  \draw[sum] (10.130,0) -- (10.130,0.490);
  \draw[sum] (10.274,0) -- (10.274,0.576);
  \draw[sum] (10.443,0) -- (10.443,0.401);
  \draw[nonsum] (10.526,0) -- (10.526,0.290);
  \draw[sum] (10.714,0) -- (10.714,0.900);
  \draw[sum] (10.766,0) -- (10.766,0.436);
  \draw[nonsum] (10.794,0) -- (10.794,0.290);
  \draw[sum] (10.800,0) -- (10.800,0.490);
  \draw[sum] (10.830,0) -- (10.830,0.490);
  \draw[sum] (10.837,0) -- (10.837,0.490);
  \draw[nonsum] (11.057,0) -- (11.057,0.355);
  \draw[sum] (11.111,0) -- (11.111,0.649);
  \draw[sum] (11.290,0) -- (11.290,0.490);
  \draw[nonsum] (11.381,0) -- (11.381,0.355);
  \draw[nonsum] (11.435,0) -- (11.435,0.290);
  \draw[nonsum] (11.437,0) -- (11.437,0.290);
  \draw[sum] (11.538,0) -- (11.538,0.827);
  \draw[sum] (11.569,0) -- (11.569,0.490);
  \draw[sum] (11.638,0) -- (11.638,0.436);
  \draw[nonsum] (11.693,0) -- (11.693,0.290);
  \draw[nonsum] (11.722,0) -- (11.722,0.290);
  \draw[sum] (11.818,0) -- (11.818,0.490);
  \draw[sum] (11.826,0) -- (11.826,0.355);
  \draw[sum] (11.842,0) -- (11.842,0.490);
  \draw[sum] (11.871,0) -- (11.871,0.727);
  \draw[sum] (11.921,0) -- (11.921,0.355);
  \draw[sum] (11.960,0) -- (11.960,0.436);
  \draw[sum] (12.000,0) -- (12.000,0.859);
\end{tikzpicture}
\caption{The low balance tail: all $\NTAILVALUES$ distinct balance constants
of order-$14$ posets in $(1/3,\,9/25]$.  The shaded region is the gap
conjectured empty in \cite{peczarski-ladders}; no class lies in it.  Stem
height grows with the number of classes attaining the value, shown as
$\times k$; values marked $\ast$ are attained by a poset that is not a
nontrivial ordinal sum.  Six values are labelled; the rest are drawn
unlabelled.}
\label{fig:tail}
\end{figure}

At order $14$ there are $\NTAILVALUES$ distinct values in the window, attained
by $\NTAILCLASSES$ classes in all.  Exactly $30$ are the order-$13$ tail
inherited by ordinal-sum padding; the remaining $12$ are attained by an
order-$14$ non-sum.  The values cluster immediately above the conjectured gap
rather than spreading across the window: the five smallest,
$37/106$, $97/277$, $254/725$, $157/448$, and $20/57$, all lie within
$2\times10^{-3}$ of each other, and the whole window holds fewer than five
hundred classes out of $1.3$ trillion.  The $12$ new values are attained by
one or two non-sum classes each; their minimum is the quantity
Theorem~\ref{thm:ladder} concerns.

\subsection{Ladders with broken rungs}

For $n\geq3$ the ladder $L_n$ has elements $u_1<u_2<\cdots$ and
$v_1<v_2<\cdots$ interleaved by rungs; breaking a rung deletes one cross
relation.  Peczarski writes $L_{n,i_1,\ldots,i_k}$ for the $n$-element ladder
with rungs $i_1,\ldots,i_k$ broken \cite{peczarski-ladders}.

We did not quote his table.  For each order the program constructs every
broken-rung ladder from his definition, computes its balance constant exactly,
and takes the minimum; at order $14$ this is an exhaustive search over all
$2^{11}=2{,}048$ rung subsets.  The result is then compared with the census
minimum over non-ordinal-sums, which is what proves Theorem~\ref{thm:ladder}.

\begin{table}[ht]
\centering
\begin{tabular}{rlll}
\toprule
$n$ & census minimum, non-sum & worst broken-rung ladder & agree\\
\midrule
 7 & $14/39$   & $9/25 = L_{7,1,2}$      & no$^\dagger$\\
 8 & $16/45$   & $17/46 = L_{8,1,2,3}$   & no$^\dagger$\\
 9 & $6/17$    & $6/17 = L_{9,1,2,3,4}$  & yes\\
10 & $37/106$  & $37/106 = L_{10,1,5}$   & yes\\
11 & $20/57$   & $20/57 = L_{11,1,6}$    & yes\\
12 & $97/277$  & $97/277 = L_{12,1,7}$   & yes$^\ddagger$\\
13 & $157/448$ & $157/448 = L_{13,1,8}$  & yes$^\ddagger$\\
14 & $254/725$ & $254/725 = L_{14,1,9}$  & yes, new\\
\bottomrule
\end{tabular}
\caption{Peczarski's broken-rung conjecture against the census.  Each ladder
value was recomputed by exhaustive search over rung subsets, not quoted.
$\dagger$: $n=5,7,8$ are the exceptions \cite{peczarski-ladders} excludes
explicitly; his conjecture is stated for every other order, and verified there
at $3$, $4$, $6$, $9$, $10$, and $11$.  $\ddagger$: also obtained by De Loof
et al.\ \cite[\S4.2.2]{deloof-census}; our rows reproduce theirs.}
\label{tab:ladder}
\end{table}

\begin{figure}[ht]
\centering
\begin{tikzpicture}[
  x=1cm, y=1cm,
  el/.style={circle, draw, minimum size=4.2mm, inner sep=1pt,
             font=\scriptsize},
  rail/.style={draw=black!60, line width=0.5pt},
  rung/.style={draw=black!60, line width=0.5pt},
]
  \node[el] (l0) at (0.00,0.00) {0};
  \node[el] (l1) at (0.00,1.10) {1};
  \node[el] (l2) at (1.05,0.00) {2};
  \node[el] (l3) at (1.05,1.10) {3};
  \node[el] (l4) at (2.10,0.00) {4};
  \node[el] (l5) at (2.10,1.10) {5};
  \node[el] (l6) at (3.15,0.00) {6};
  \node[el] (l7) at (3.15,1.10) {7};
  \node[el] (l8) at (4.20,0.00) {8};
  \node[el] (l9) at (4.20,1.10) {9};
  \node[el] (l10) at (5.25,0.00) {10};
  \node[el] (l11) at (5.25,1.10) {11};
  \node[el] (l12) at (6.30,0.00) {12};
  \node[el] (l13) at (6.30,1.10) {13};
  \draw[rail] (l0) -- (l2);
  \draw[rung] (l0) -- (l3);
  \draw[rail] (l1) -- (l3);
  \draw[rail] (l2) -- (l4);
  \draw[rung] (l2) -- (l5);
  \draw[rail] (l3) -- (l5);
  \draw[rung] (l3) -- (l6);
  \draw[rail] (l4) -- (l6);
  \draw[rung] (l4) -- (l7);
  \draw[rail] (l5) -- (l7);
  \draw[rung] (l5) -- (l8);
  \draw[rail] (l6) -- (l8);
  \draw[rung] (l6) -- (l9);
  \draw[rail] (l7) -- (l9);
  \draw[rung] (l7) -- (l10);
  \draw[rail] (l8) -- (l10);
  \draw[rung] (l8) -- (l11);
  \draw[rail] (l9) -- (l11);
  \draw[rail] (l10) -- (l12);
  \draw[rung] (l10) -- (l13);
  \draw[rail] (l11) -- (l13);
  \node[font=\scriptsize, black!55] at (0.00,-0.75) {broken};
  \node[font=\scriptsize, black!55] at (4.20,-0.75) {broken};
\end{tikzpicture}
\caption{$L_{14,1,9}$, the worst broken-rung ladder at order $14$, with
$\bal=254/725$ and $\ext=725$.  The two rungs marked \emph{broken} are the
missing cross relations.}
\label{fig:ladder}
\end{figure}

Table~\ref{tab:ladder} agrees with his Table 1 at every order where the two
overlap, including the two exceptions he records.

A reader tabulating $n=10$ through $14$ will notice that the extremal ladder
is $L_{n,1,n-5}$ throughout, and should know that this does not continue: it
fails at $n=9$, where four rungs break, and Peczarski's Table 1 gives
$L_{15,1,5,6,10}$ and $L_{17,1,5,8,12}$.

\subsection{The equality locus}

\begin{proof}[Proof of Proposition~\ref{prop:equality}]
The poset $T$ has $\ext(T)=3$ and $\bal(T)=1/3$.  By \eqref{eq:sumbal}, an
ordinal sum of singletons and copies of $T$ has balance constant $1/3$ as soon
as at least one summand is a copy of $T$; if all summands are singletons the
sum is a chain and has no incomparable pair.  Decomposition into
ordinal-indecomposable summands is unique, so these posets are in bijection
with the compositions of $n$ into parts $1$ and $3$.  Splitting on the first
part gives $a(n)=a(n-1)+a(n-3)$, and discarding the all-singleton composition
gives $a(n)-1$.
\end{proof}

The converse --- that nothing else attains $1/3$ --- is a computational
statement, verified here at every order through $14$ and, to our knowledge,
not proved.  The aggregator does not merely count: it recomputes each
representative's balance constant and separately recognises the family
structurally, by decomposing the poset and checking that every summand is a
singleton or a copy of $T$, without computing any probability.  The two
recognitions must agree.

\section{Majority cycles}\label{sec:cycles}

\subsection{Which majority relation}

De Loof et al.\ define the LEM relation as the strict $1/2$-cut of the mutual
rank probability relation over \emph{all} ordered pairs of distinct elements.
A comparable pair $x<y$ has $\Pr[x\prec y]=1$ and so contributes an edge.
Writing $\Dig_{\mathrm{inc}}(P)$ for the sub-digraph on incomparable pairs
only, $\Dig(P)$ is $\Dig_{\mathrm{inc}}(P)$ together with the strict order
relation of $P$.

One would like to work with the cheaper $\Dig_{\mathrm{inc}}(P)$, arguing that
comparable edges cannot lie on a cycle.  That is provable only for triangles.

\begin{proposition}\label{prop:three}
No $3$-cycle of $\Dig(P)$ uses a comparable edge.
\end{proposition}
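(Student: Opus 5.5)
We argue by contradiction: suppose $x\to y\to z\to x$ is a $3$-cycle of $\Dig(P)$ and one of its edges, after cyclic relabelling say $x\to y$, comes from a comparable pair. A comparable pair contributes an edge only in the direction of the order, since $\Pr[y\prec x]=0$, so $x<y$ in $P$. The cycle then also requires $\Pr[y\prec z]>1/2$ and $\Pr[z\prec x]>1/2$. The plan is to show these two inequalities are incompatible with $x<y$, using only the elementary union bound on events over a uniformly random linear extension.

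The key step is an inclusion between events. In any linear extension, $x<y$ forces $x\prec y$. Hence if $y\prec z$ and $z\prec x$ both held, we would get $y\prec z\prec x\prec y$, which is impossible. So the events $\{y\prec z\}$ and $\{z\prec x\}$ are disjoint, and
\[
  \Pr[y\prec z]+\Pr[z\prec x]\leq 1.
\]
This contradicts the two strict inequalities above, whose sum exceeds $1$.

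No case split on whether the other edges are comparable is needed. The argument uses only $\Pr[x\prec y]=1$ and works whether $z$ is comparable to $x$ or $y$ or not; if it is, the corresponding probability is $0$ or $1$ and the same contradiction arises.

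There is no real obstacle. The one point to state carefully is that a comparable pair yields an edge only in the direction of the order, because $\Pr[x\prec y]=1$ and $\Pr[y\prec x]=0$, so that $x\to y$ being an edge means $x<y$ rather than $y<x$. It also helps to say why this argument fails for longer cycles, motivating the remark that only triangles are covered. For a $4$-cycle the corresponding events $\{y\prec z\}$, $\{z\prec w\}$ and $\{w\prec x\}$ need not be pairwise disjoint; only their intersection is empty. The union bound then gives a sum at most $2$, which is compatible with three probabilities each exceeding $1/2$.
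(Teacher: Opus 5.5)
Your proof is correct and is essentially the paper's argument. The paper uses the inclusion $\{z\prec x\}\subseteq\{z\prec y\}$ (forced by $x<y$) to get $\Pr[z\prec y]\geq\Pr[z\prec x]>1/2$, hence the edge $z\to y$, contradicting $y\to z$; your disjointness of $\{y\prec z\}$ and $\{z\prec x\}$ is the same inclusion stated through complements, and your explicit remark that a comparable edge can only point along the order is a point the paper leaves implicit.
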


\begin{proof}
Suppose $x\to y\to z\to x$ with $x<y$ in $P$.  Then $z\to x$ and $x<y$ give
$\Pr[z\prec y]\geq\Pr[z\prec x]>1/2$, since every extension with $z\prec x$
also has $z\prec y$.  Thus $z\to y$, contradicting $y\to z$.
\end{proof}

The natural extension of this argument replaces a $k$-cycle through a
comparable edge by a $(k-1)$-cycle which need not contain one, so it yields a
contradiction only at $k=3$.  Rather than carry an unproved lemma into a
computation of this size, the census builds the full relation $\Dig(P)$, which
is what \cite{deloof-census} defines, and computes the restricted spectrum
alongside it as a separate set of counters.  The comparison then becomes
measured data.  Adding the comparable edges costs nothing: they are read off
the order relation and require no linear-extension counting.

\subsection{The spectrum at order 14}

\begin{table}[ht]
\centering
\begin{tabular}{rrrrrrrr}
\toprule
$n$ & any & $3$ & $4$ & $5$ & $6$ & $7$ & $8$\\
\midrule
 9 & 5           & 5           & 0        & 0    & 0   & 0        & 0\\
10 & 153         & 148         & 6        & 0    & 0   & 0        & 0\\
11 & 5{,}815     & 5{,}740     & 101      & 0    & 0   & 0        & 0\\
12 & 218{,}097   & 216{,}573   & 2{,}885  & 5    & 21  & 0        & 0\\
13 & 9{,}348{,}400 & 9{,}318{,}881 & 102{,}127 & 471 & 363 & 1     & 0\\
14 & \NCYCLIC    & \CTHREE     & \CFOUR   & \CFIVE & \CSIX & \CSEVEN & \CEIGHT\\
\bottomrule
\end{tabular}
\caption{Classes carrying a simple majority cycle of each length.  A class may
carry cycles of several lengths, so the ``any'' column is not the row sum.
Orders $9$--$13$ reproduce \cite{deloof-census}.  No class at order $14$
carries a cycle of length $9$ or more.}
\label{tab:cycles}
\end{table}

The proportion of classes carrying a cycle continues to rise, as Gehrlein and
Fishburn conjectured \cite{gehrlein-fishburn}: at order $14$ it is $3.5767$
per $10^4$, against $2.764$ at order $13$.  De Loof et al.\ fitted
$f(n)=a+bn+n\log n$ to their orders $9$ through $13$, obtaining $a=5.2304$
and $b=-2.7572$, and expected it to approximate the incidence for $n$ below
about $50$.  Their fit gives $f(14)=3.5764$, against the measured $3.5767$.
The residual is $0.00031$, but one out-of-sample point cannot validate the
functional form.

The counts fall steeply in length but not monotonically.  At order $12$ there
are $21$ classes with a $6$-cycle and only $5$ with a $5$-cycle; at orders
$13$ and $14$ the order reverses, though the $5$-to-$6$ ratio stays far below
the $3$-to-$4$ ratio.  Section~\ref{sec:even} offers a structural reason why
even lengths can be over-represented.

\subsection{Cycles of length eight}

Exactly $\CEIGHT$ classes carry an $8$-cycle.  The census reports as its
canonical witness the lexicographically least encoding,
\begin{center}
\texttt{14:0000000000000000000300050006000300050002000400f7035705a7},
\end{center}
in the encoding of Section~\ref{sec:verify}, with $\ext(P)=7{,}511{,}140$,
$\bal(P)=1/2$, width $8$, height $3$, and cycle spectrum $\{3,4,5,6,7,8\}$.

Figure~\ref{fig:cycle8} draws a different member of the family,
\begin{center}
\texttt{14:0000000000000003000500030005000300050002000401ff043f03c7},
\end{center}
chosen because its cycle spectrum is exactly $\{4,8\}$ --- it carries an
$8$-cycle and no odd cycle at all --- which is the phenomenon of
Section~\ref{sec:even}.  It has three minimal elements, an eight-element
antichain, and three maximal elements.  For both posets the restricted digraph
$\Dig_{\mathrm{inc}}(P)$ has the same spectrum as $\Dig(P)$.

\begin{figure}[ht]
\centering
\begin{tikzpicture}[
  x=1cm, y=1cm,
  every node/.style={inner sep=1pt},
  el/.style={circle, draw=black!65, minimum size=4.4mm,
              font=\scriptsize},
  cyc/.style={el, draw=black, line width=0.9pt},
  odd/.style={el, fill=black!12},
  cov/.style={draw=black!55, line width=0.35pt},
  maj/.style={->, >=stealth, draw=black!30, line width=0.4pt},
  hot/.style={->, >=stealth, draw=black, line width=1.0pt},
]
  \node[el] (p11) at (-2.080,0.000) {11};
  \node[el] (p12) at (0.000,0.000) {12};
  \node[el] (p13) at (2.080,0.000) {13};
  \node[cyc] (p3) at (-2.730,1.550) {3};
  \node[cyc] (p5) at (-1.950,1.550) {5};
  \node[cyc] (p7) at (-1.170,1.550) {7};
  \node[cyc] (p4) at (-0.390,1.550) {4};
  \node[cyc] (p6) at (0.390,1.550) {6};
  \node[cyc] (p8) at (1.170,1.550) {8};
  \node[cyc] (p9) at (1.950,1.550) {9};
  \node[cyc] (p10) at (2.730,1.550) {10};
  \node[el] (p0) at (-2.080,3.100) {0};
  \node[el] (p1) at (0.000,3.100) {1};
  \node[el] (p2) at (2.080,3.100) {2};
  \draw[cov] (p3) -- (p0);
  \draw[cov] (p3) -- (p1);
  \draw[cov] (p4) -- (p0);
  \draw[cov] (p4) -- (p2);
  \draw[cov] (p5) -- (p0);
  \draw[cov] (p5) -- (p1);
  \draw[cov] (p6) -- (p0);
  \draw[cov] (p6) -- (p2);
  \draw[cov] (p7) -- (p0);
  \draw[cov] (p7) -- (p1);
  \draw[cov] (p8) -- (p0);
  \draw[cov] (p8) -- (p2);
  \draw[cov] (p9) -- (p1);
  \draw[cov] (p10) -- (p2);
  \draw[cov] (p11) -- (p3);
  \draw[cov] (p11) -- (p4);
  \draw[cov] (p11) -- (p5);
  \draw[cov] (p11) -- (p6);
  \draw[cov] (p11) -- (p7);
  \draw[cov] (p11) -- (p8);
  \draw[cov] (p12) -- (p3);
  \draw[cov] (p12) -- (p4);
  \draw[cov] (p12) -- (p5);
  \draw[cov] (p12) -- (p10);
  \draw[cov] (p13) -- (p6);
  \draw[cov] (p13) -- (p7);
  \draw[cov] (p13) -- (p8);
  \draw[cov] (p13) -- (p9);
  \node[font=\footnotesize] at (0,-0.95) {(a) $P$, with $\mathrm{e}(P)=4,369,616$};
  \node[odd] (m3) at (6.420,3.300) {3};
  \node[el] (m4) at (7.657,2.787) {4};
  \node[odd] (m6) at (8.170,1.550) {6};
  \node[el] (m7) at (7.657,0.313) {7};
  \node[odd] (m5) at (6.420,-0.200) {5};
  \node[el] (m9) at (5.183,0.313) {9};
  \node[odd] (m8) at (4.670,1.550) {8};
  \node[el] (m10) at (5.183,2.787) {10};
  \draw[hot] (m3) to (m4);
  \draw[maj] (m3) to[bend left=12] (m9);
  \draw[hot] (m4) to (m6);
  \draw[maj] (m4) to[bend left=12] (m8);
  \draw[maj] (m5) to[bend left=12] (m4);
  \draw[hot] (m5) to (m9);
  \draw[hot] (m6) to (m7);
  \draw[maj] (m6) to[bend left=12] (m10);
  \draw[maj] (m7) to[bend left=12] (m3);
  \draw[hot] (m7) to (m5);
  \draw[maj] (m8) to[bend left=12] (m7);
  \draw[hot] (m8) to (m10);
  \draw[maj] (m9) to[bend left=12] (m6);
  \draw[hot] (m9) to (m8);
  \draw[hot] (m10) to (m3);
  \draw[maj] (m10) to[bend left=12] (m5);
  \node[font=\footnotesize] at (6.420,-0.95) {(b) $D(P)$ restricted to the cyclic part};
\end{tikzpicture}
\caption{An order-$14$ poset $P$ whose majority digraph contains an $8$-cycle.
In (a) the elements lying on a cycle are drawn with a heavy border; they are
exactly the eight-element middle antichain.  In (b) the highlighted $8$-cycle
is $3\to4\to6\to7\to5\to9\to8\to10\to3$, and the two shadings are the two
sides of a bipartition of the underlying graph, so every cycle of $\Dig(P)$
has even length.  Here $\ext(P)=4{,}369{,}616$ and $\bal(P)=1/2$ exactly.}
\label{fig:cycle8}
\end{figure}

\subsection{Even spectra}\label{sec:even}

Call the \emph{cyclic part} of $\Dig(P)$ the set of vertices lying on at least
one directed cycle.  The following is immediate but explains a pattern in the
data.

\begin{observation}\label{obs:even}
If the underlying undirected graph of the cyclic part of $\Dig(P)$ is
bipartite, then every simple cycle of $\Dig(P)$ has even length.  In
particular $\Dig(P)$ has no $3$-cycle.
\end{observation}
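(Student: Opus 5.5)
The argument is a direct graph-theoretic one; no property of linear extensions is needed beyond the fact that $\Dig(P)$ has no loops. First I will note that every vertex of a simple directed cycle of $\Dig(P)$ lies on a directed cycle, by definition, so it belongs to the cyclic part. Every edge of such a cycle joins two vertices of the cyclic part. Hence each simple directed cycle $C$ of $\Dig(P)$, with its orientation forgotten, is a closed walk in the underlying undirected graph $G$ of the sub-digraph induced on the cyclic part.

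Next I will check that this closed walk is a genuine cycle of $G$ and does not collapse. A simple directed cycle of length $k\geq 2$ visits $k$ distinct vertices. For $k\geq3$, forgetting orientation gives an undirected cycle of length $k$ in $G$. The case $k=2$ cannot occur: as noted when $\Dig(P)$ was defined, $\Pr[x\prec y]+\Pr[y\prec x]=1$ rules out a $2$-cycle, and there are no loops.

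Now I will apply the hypothesis. Fix a bipartition $(A,B)$ of $G$. Consecutive vertices of $C$ are adjacent in $G$, so they lie in opposite classes, and the class alternates at each step around $C$. Since $C$ returns to its starting vertex after $k$ steps, $k$ is even. Because the hypothesis is about $G$ itself, there is no need to worry about edges of $\Dig(P)$ outside the cyclic part: every simple cycle already lies inside it. The \emph{in particular} clause follows at once, since $3$ is odd.

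I expect no step to be a real obstacle. The only point needing care is the first one: the bipartiteness hypothesis is imposed on the cyclic part and not on the whole of $\Dig(P)$. I will therefore state explicitly that every simple cycle is contained in the cyclic part, which holds by definition of that set.
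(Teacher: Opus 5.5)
Your proof is correct and is the immediate argument the paper has in mind; the paper calls the observation immediate and writes out no proof. The step checking that the cycle does not collapse (excluding $k=2$ and loops) is harmless but unnecessary, since every closed walk in a bipartite graph already has even length.
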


The census retains one witness per cycle length per shard, giving $13$
distinct posets carrying an $8$-cycle out of the $\CEIGHT$ that exist, and
$28$ carrying a $7$-cycle.  Of the $13$, \emph{nine} have
bipartite cyclic part and spectrum exactly $\{4,8\}$; the remaining four have
the full spectrum $\{3,4,5,6,7,8\}$.  None of the $28$ length-$7$ witnesses is
bipartite, as Observation~\ref{obs:even} requires, since a $7$-cycle is odd.

The implication does not reverse.  Of the $3{,}212$ retained witnesses with
an all-even spectrum, seven have nonbipartite cyclic part; their spectrum is
exactly $\{4\}$ in every case.  One is
\texttt{14:00000000000000000000000000070019002a0034000100200413082c}.  An
undirected odd cycle need not be consistently oriented, so it need not carry a
directed one.

Although $3$-cycles outnumber $8$-cycles by seven orders of magnitude
(Table~\ref{tab:cycles}), the witness in Figure~\ref{fig:cycle8} has an
$8$-cycle and no $3$-cycle.  In that witness the
cyclic part is a $2$-in, $2$-out bipartite digraph on $4+4$ vertices, and the
$8$-cycle is a Hamiltonian cycle of that digraph.  We do not know whether the
even-spectrum posets form a natural class, nor whether the $\{4,8\}$ spectrum
is forced once the cyclic part is bipartite and has eight vertices.

Two further regularities are recorded as observations because we can neither
prove them nor rule out that they are artefacts of how witnesses are sampled:
within each shard the census retains the lexicographically least encoding for
each cycle length, so the witness set is determined by the encoding rather
than drawn uniformly from the classes.

\begin{observation}\label{obs:half}
All $37$ recorded order-$14$ posets carrying a cycle of length $7$ or $8$ have
$\bal(P)=1/2$ exactly.
\end{observation}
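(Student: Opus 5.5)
This is a finite computational statement about $37$ specific posets: the $28$ retained length-$7$ witnesses and the $9$ retained length-$8$ witnesses that are not already counted among them. Together these are the $37$ distinct posets whose encodings the census recorded. The plan is to certify it by exact recomputation of $\bal(P)$ for each witness, not to derive it from theory.

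For each of the $37$ encodings, the procedure is as follows.
\begin{enumerate}
\item Decode the poset and take the transitive closure.
\item Run the forward recurrence \eqref{eq:forward} and the corresponding backward pass over $\mathcal I(P)$, obtaining $\ext(P)$ and, by \eqref{eq:pairs}, $\ext(P+xy)$ for every incomparable pair.
\item Exhibit a pair $x\parallel y$ with $2\,\ext(P+xy)=\ext(P)$, which is an exact integer identity.
\end{enumerate}
Since $\min\{\Pr[x\prec y],\Pr[y\prec x]\}\le 1/2$ for every pair, such a pair gives $\bal(P)=1/2$ at once. No maximisation over the remaining pairs is needed, so the certificate is just a single pair.

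I would strengthen this by looking for a structural reason for the exact tie. In the Figure~\ref{fig:cycle8} witness, the middle antichain contains elements with identical up- and down-sets: $3,5$ both lie below $\{0,1\}$ and above $\{11,12\}$, and likewise $6,8$. Two such twin elements are swapped by an automorphism of $P$. That automorphism gives a bijection between extensions with $x\prec y$ and those with $y\prec x$, so $\Pr[x\prec y]=1/2$ exactly. For each witness I would therefore first search for a transposition automorphism, i.e.\ a twin pair with equal strict up-sets and down-sets, which is checkable without any counting. Only if none exists would I fall back to the ideal-lattice count.

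The main obstacle is that the statement concerns only the recorded witnesses and is not a theorem about all $61+30$ classes with long cycles. The witness retention rule is deterministic, so the proof must be exactly the verification over the stored list, with the independent Python checker using \texttt{fractions.Fraction}. If some witness lacks twins, the exact pair count is still decisive, but the heuristic explanation (that large symmetric antichains breed both long cycles and balance $1/2$) would then not cover it. I would report that case separately rather than claim the explanation in general.
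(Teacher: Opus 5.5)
Your proposal is correct and matches the paper's approach: the observation is a finite check, done by exact recomputation from the $37$ retained witness strings, and exhibiting one incomparable pair with $2\ext(P+xy)=\ext(P)$ suffices because $\bal(P)\le 1/2$ always. The paper likewise notes that an automorphism transposing an incomparable pair, of which your twin pairs are the simplest kind, explains the tie for $35$ of the $37$ witnesses, while the remaining two have trivial automorphism group and are exactly balanced only by direct count --- precisely the fallback case you anticipated.
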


Since $\bal(P)\leq1/2$ always, this says the poset is perfectly balanced.  The
natural reading --- that a long majority cycle forces near-ties --- should be
resisted without a control, and the control weakens it.  Across all
$\NWITNESS$ distinct witnesses the census retains, the proportion with
$\bal(P)=1/2$ is $71.0\%$ at length $3$ ($11{,}626$ of $16{,}384$), $49.8\%$
at length $4$ ($8{,}162$ of $16{,}384$), $27.2\%$ at length $5$ ($2{,}296$ of
$8{,}428$), and $98.8\%$ at length $6$ ($3{,}067$ of $3{,}103$).  Exact
balance is therefore common among
cycle-carrying posets generally, and conspicuously not monotone in cycle
length, so the value $100\%$ at lengths $7$ and $8$ --- over $28$ and $13$
witnesses --- is suggestive rather than evidence of a universal mechanism.
A transposing automorphism of an incomparable pair forces that pair to be
exactly balanced \cite[Prop.~2.3]{olson-sagan}, and $35$ of the $37$ witnesses
admit one.  The remaining two have trivial automorphism group and are exactly
balanced anyway.

\begin{observation}\label{obs:antichain}
The cyclic part of $\Dig(P)$ is an antichain in $P$ for every one of the $37$
recorded posets carrying a cycle of length $7$ or $8$, but not in general:
of the $\NWITNESS$ distinct retained witnesses, $166$ have a cyclic part
containing a comparable pair.  They are concentrated at length $6$, where
$118$ of $3{,}103$ do, against under $0.42\%$ at every other length.
\end{observation}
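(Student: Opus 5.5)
This observation is purely computational, so the plan is to verify it directly on the finite set of retained witnesses rather than derive it. The object of study is the set of $\NWITNESS$ distinct witnesses retained by the census, each stored in the encoding of Section~\ref{sec:verify}. Among them are the $37$ posets carrying a cycle of length $7$ or $8$, namely the $28$ length-$7$ witnesses and the $13$ length-$8$ witnesses, with overlaps removed.

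For each witness the checker will proceed as follows.
\begin{enumerate}
\item Recompute the order relation and take its transitive closure.
\item Compute every pair count $\ext(P+xy)$ exactly by the forward and backward recurrences \eqref{eq:forward} and \eqref{eq:pairs}, and use these to build the full digraph $\Dig(P)$, including the comparable edges.
\item Determine the cyclic part of $\Dig(P)$. The key simplification is that a vertex lies on a directed cycle exactly when its strongly connected component has at least two vertices, since $\Dig(P)$ has no loops. So one Tarjan pass suffices, and no cycle enumeration is needed.
\item Test every pair of vertices in the cyclic part for comparability in $P$, and record the cycle lengths that certified the witness's retention.
\end{enumerate}
The first claim then reduces to checking that this test finds no comparable pair for each of the $37$ long-cycle witnesses.

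For the second claim, I will tally, across all $\NWITNESS$ witnesses, those whose cyclic part contains a comparable pair, and bin the tally by retention length. The expected outcome is $166$ in total and $118$ of the $3{,}103$ witnesses at length $6$. At each other length, the proportion is compared against $0.42\%$ by an exact integer cross-product, matching the paper's no-floating-point discipline.

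A witness retained for several lengths must be binned consistently with the per-length denominators quoted in Observation~\ref{obs:half}. I will therefore count per (witness, length) slot for the rates and per distinct witness for the total of $166$.

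Nothing in this plan is mathematically hard. The main risk is bookkeeping:
\begin{enumerate}
\item The definition of ``cyclic part'' must use $\Dig(P)$ and not $\Dig_{\mathrm{inc}}(P)$. With the restricted digraph, comparable pairs could only enter the cyclic part through longer cycles, and the counts would change.
\item Deduplication across shards and lengths must be handled correctly.
\end{enumerate}
I would guard against both by running an independent Python implementation using \texttt{fractions.Fraction}, and requiring it to agree with the C census on every count.
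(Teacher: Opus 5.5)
Your proposal is correct and follows the paper's own route. The observation is a computational fact about the retained witness archive. The paper establishes it the same way: it decodes each witness string, builds the full digraph $\Dig(P)$ with its comparable edges, takes the vertices lying on directed cycles (your nontrivial-strong-component criterion is exactly right for a loopless digraph), tests them for comparability, and bins by retention length as in Observation~\ref{obs:half}. This recomputation is what \texttt{scripts/analyze\_witness\_archive.py} performs.

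One aside is unsupported and should be dropped. You say that using $\Dig_{\mathrm{inc}}(P)$ instead ``would'' change the counts, but Observation~\ref{obs:relations} equates only the cycle spectra of the two digraphs, and nothing is established about their cyclic parts. Since you work with $\Dig(P)$, as the statement requires, this does not affect the check.
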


Thus Proposition~\ref{prop:three} does not extend to the whole cyclic part:
$166$ retained witnesses contain a comparable pair among their cyclic
vertices, although no directed $3$-cycle uses a comparable edge.  This is
compatible with Observation~\ref{obs:relations}: a comparable pair may lie in
the cyclic part without any cycle using the edge between them.

For every order-$14$ class the census computes the cycle spectrum of $\Dig(P)$
and of $\Dig_{\mathrm{inc}}(P)$ separately.

\begin{observation}\label{obs:relations}
For every enumerated poset through order $14$, the full and restricted
majority digraphs have the same cycle spectrum.
\end{observation}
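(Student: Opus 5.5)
This observation is computational, so the plan is to reduce it to finitely many checks that the census already performs and to discharge as much as possible by argument. Write $\Dig(P)=\Dig_{\mathrm{inc}}(P)\cup{<_P}$. Since $\Dig_{\mathrm{inc}}(P)\subseteq\Dig(P)$, every cycle length of the restricted digraph is a cycle length of the full one. Only the reverse inclusion needs work: every length $k$ realised by a simple cycle of $\Dig(P)$ must also be realised, possibly by a different cycle, in $\Dig_{\mathrm{inc}}(P)$.

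First, I reduce to ordinal-indecomposable posets. By the ordinal-sum discussion in Section~\ref{sec:counting}, no cycle of $\Dig$ crosses summands. Within a summand the probabilities are those of the summand, so both spectra of $P$ are the unions of the spectra of its summands. It therefore suffices to treat ordinal-indecomposable posets on at most $14$ elements. By duality, one member of each dual pair suffices, since reversing $P$ reverses every edge of both digraphs and preserves cycle lengths. The case $k=3$ is already settled by Proposition~\ref{prop:three}: a $3$-cycle of $\Dig(P)$ uses only incomparable edges, so it lies in $\Dig_{\mathrm{inc}}(P)$.

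For $k\geq4$, I would try a shortcut argument. Let $C$ be a cycle of $\Dig(P)$ using a comparable edge $x\to y$ with $x<y$, and let $w\to x$ be the preceding edge. Then $\Pr[w\prec y]\geq\Pr[w\prec x]>1/2$, so $w\to y$ is an edge, and replacing $w\to x\to y$ by $w\to y$ gives a cycle of length $k-1$. This only shows that a cycle of $\Dig_{\mathrm{inc}}$-type exists at some length at most $k$; it does not preserve $k$. That is exactly the gap the paper notes. I expect no general proof of equality of spectra, and this is the main obstacle. What I would try first is a detour argument: look for an element $z\parallel x,y$ in the cyclic part that lets the comparable edge be replaced by a two-edge incomparable path $x\to z\to y$, which keeps the length. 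I expect this to fail in general, so the statement remains an observation.

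The actual verification is therefore the census itself. For every class surviving the reductions, compute all pair probabilities exactly via \eqref{eq:pairs}, build both digraphs, and run the bounded simple-cycle search on each, recording the two spectra as bitmasks. The aggregator then checks that the masks agree for every class, and rejects the shard if the search budget is exceeded, so that no class is skipped. Soundness rests on three points: exact $128$-bit cross-product comparisons, so that every strict $>1/2$ test is decided correctly; a complete enumeration of classes, verified against the published totals; and the cycle search being exhaustive up to length $n$, which is cheap since $n\leq14$. Independent recomputation of the witnesses gives a further check.
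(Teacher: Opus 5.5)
Your plan is sound, but it checks the reverse inclusion differently from the paper. You propose a per-class comparison: build both digraphs, record both spectra as bitmasks, and have the pipeline confirm they agree class by class.

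The paper never compares spectra class by class. For each length $k$, the census records only two exhaustive counters: the number of classes whose $\Dig(P)$ carries a $k$-cycle, and the number whose $\Dig_{\mathrm{inc}}(P)$ does. The inclusion you already wrote down, $\Dig_{\mathrm{inc}}(P)\subseteq\Dig(P)$, then does the rest. For each $k$, the second set of classes is a subset of the first, so equal counters at every order and length force the sets to be equal, and hence every class's spectrum agrees.

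What the paper's route buys is that the observation follows from marginal counts the census needs anyway for Table~\ref{tab:cycles}, with no extra per-class instrumentation. Since the shards store only marginals and selected witnesses, it is also the only argument the data actually produced supports.

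Your per-class comparison would be equally valid and would pinpoint any discrepancy to a specific class. As written, though, it describes a check the pipeline does not perform. An aggregator cannot perform it over $1.3\times10^{12}$ classes unless each shard emits its own mismatch count.

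Your ordinal-sum and duality reductions and your appeal to Proposition~\ref{prop:three} for $k=3$ are correct but unnecessary for the counting argument. Your assessment of the shortcut argument matches the paper's: it lowers the cycle length by one, so on its own it cannot preserve the spectrum.
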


This is an exhaustive computational conclusion, not merely a comparison of
retained witnesses.  For each length, the classes carrying that cycle in
$\Dig_{\mathrm{inc}}(P)$ form a subset of those carrying it in $\Dig(P)$,
because the former is a subdigraph of the latter.  The exhaustive counters
give equal cardinalities for these two sets at every tested order and length,
so the sets, and hence every class's spectrum, agree.  Proposition
\ref{prop:three} proves the length-$3$ case in general; the longer-cycle
statement remains Question~\ref{q:relations} below.

\section{The Gold Partition Conjecture}\label{sec:certificates}

\subsection{Certificates}

For an ordered incomparable pair $(x,y)$, Peczarski calls $z$ a \emph{slave}
if either $z>x$ and $z\parallel y$, or $z<y$ and $z\parallel x$.  The verifier
uses the following sufficient conditions from Peczarski's Definitions 1--2 and
Lemmas 1--2 \cite[pp.~91--92]{peczarski-gold}.

\begin{lemma}[Peczarski]\label{lem:certificates}
A non-chain $P$ has the Gold Partition property if one of the following
conditions holds.
\begin{enumerate}
  \item There is an ordered incomparable pair $(x,y)$ with at most one slave
  and
  \[
    2\ext(P+xy)\geq\ext(P).
  \]
  \item There are three distinct elements $x,y,z$ such that
  \[
    \ext(P+xy+yz)
      \leq \max\{\ext(P+yx),\ext(P+zy)\}
      \leq \frac{\ext(P)}2.
  \]
\end{enumerate}
\end{lemma}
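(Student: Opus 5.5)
The plan is to verify directly that each condition yields an adaptive strategy of two comparisons satisfying $t_0\geq t_1+t_2$ on every branch. The basic observation is that if the first comparison leaves $t_1\leq t_0/2$, then any second comparison works, because $t_2\leq t_1$. So only the ``heavy'' outcome of the first comparison needs care. On that branch we must choose a second comparison both of whose outcomes leave at most $t_0-t_1$ extensions, that is, at most the weight of the light branch.

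\textbf{Condition 2.} Write $M=\max\{\ext(P+yx),\ext(P+zy)\}\leq\ext(P)/2$, and suppose first that $M=\ext(P+yx)$.
\begin{itemize}
\item Compare $x$ with $y$. If $y\prec x$, then $t_1=\ext(P+yx)\leq t_0/2$, and we are done by the observation above.
\item If $x\prec y$, then $t_1=\ext(P)-M$, and we next compare $y$ with $z$.
\begin{itemize}
\item The outcome $y\prec z$ leaves $t_2=\ext(P+xy+yz)\leq M$ by hypothesis.
\item The outcome $z\prec y$ leaves $t_2\leq\ext(P+zy)\leq M$.
\end{itemize}
In both cases $t_1+t_2\leq\ext(P)$.
\end{itemize}
If instead $M=\ext(P+zy)$, run the mirror strategy: compare $y$ with $z$ first, and on the heavy outcome $y\prec z$ compare $x$ with $y$. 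The two outcomes are then bounded by $\ext(P+xy+yz)\leq M$ and $\ext(P+yx)\leq M$. (Incomparability of the pairs involved is implicit, since $\ext(P+yx)$ and $\ext(P+zy)$ must be positive for the chain case not to arise trivially; if a comparison is forced, its $t$ is simply the current count and the inequalities still hold.)

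\textbf{Condition 1.} Compare $x$ with $y$ first. The outcome $y\prec x$ has $t_1=\ext(P+yx)\leq t_0/2$, so the only work is the branch $x\prec y$, where $t_1=\ext(P+xy)$. On this branch we need a second comparison whose outcomes each leave at most $\ext(P+yx)$ extensions. The tool I would use is an injection from linear extensions of $P+xy$, restricted to an outcome, into those of $P+yx$.

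\begin{itemize}
\item \emph{No slave.} Take an extension with $x$ before $y$, and let $z$ be an element strictly between them. If $z>x$, then $z$ must be comparable to $y$, and since $z$ precedes $y$ this forces $z<y$. Symmetrically, if $z<y$ then $z>x$. So every element between $x$ and $y$ is either above $x$ and below $y$, or incomparable to both. The plan is to compare $x$ and $y$ against their ``adjacency'': when $x$ and $y$ can be moved to be adjacent, transposing them gives an injection into $P+yx$. The second comparison is chosen to split off the extensions where this transposition is available from those where it is not, and each part is bounded by $\ext(P+yx)$.
\item \emph{Exactly one slave $z$.} Say $z>x$ and $z\parallel y$; the case $z<y$, $z\parallel x$ is dual. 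Make the second comparison $z$ versus $y$. On each outcome the slave is neutralised: either $z\prec y$ or $y\prec z$ is fixed. One then repeats the no-slave transposition argument, now moving $z$ together with $x$ as a block where necessary, to inject each outcome class into the extensions of $P+yx$.
\end{itemize}

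I expect the main obstacle to be this injection in Condition 1: making the transposition well defined when elements lie between $x$ and $y$, and checking that the slave hypothesis is exactly what prevents a cover relation from blocking the move. Here I would follow Peczarski's Lemma~1 construction \cite[pp.~91--92]{peczarski-gold} closely rather than invent a new map.
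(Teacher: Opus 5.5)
Your proof of condition~(2) is complete, and it gives more than the paper, which offers no proof of this lemma and only cites Peczarski's Lemmas~1--2.
\begin{itemize}
\item When $M=\ext(P+yx)$, the first comparison is genuine. The hypothesis $M\le\ext(P)/2$ excludes $y<x$. If $x<y$, then $M=0$ forces $y<z$, so $\ext(P+xy+yz)=\ext(P)>M$, contradicting the hypothesis.
\item The heavy branch has $t_1=\ext(P)-M$, and both outcomes of comparing $y$ with $z$ leave at most $M$. The mirror case is symmetric.
\item At most one outcome of the first comparison exceeds $t_0/2$, so the second comparison can be fixed in advance.
\end{itemize}
Your aside that $\ext(P+yx)$ and $\ext(P+zy)$ must be positive is false in general. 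For $x<y$, $x<z$, $y\parallel z$, condition~(2) holds with $\ext(P+yx)=0$. Your fallback for forced comparisons still covers this case.

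Condition~(1) has a genuine gap. The injection is the entire content of the lemma, you do not construct it, and the sketch you give would not work as written.

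The missing observation is that $x\parallel y$ excludes any $w$ with $x<w<y$. Hence, in an extension with $x\prec y$, every non-slave $w$ strictly between $x$ and $y$ is incomparable to both. If $w>x$, non-slavery forces $w<y$; if $w<y$, it forces $w>x$; either way $x<w<y$, which is impossible.

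With no slave, transposing $x$ and $y$ is therefore \emph{always} legal. This gives $\ext(P+xy)\le\ext(P+yx)$, and together with $2\ext(P+xy)\ge\ext(P)$ the pair is exactly half-balanced. No second comparison needs to be designed. Your proposed split by ``availability of the transposition'' is not needed, and it is not a comparison between two elements anyway.

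With one slave $z>x$, $z\parallel y$, comparing $y$ with $z$ second is the right choice. But moving $x$ and $z$ ``as a block'' is not injective: the image forgets where $z$ sat among the elements between $x$ and $y$. What works is the following.
\begin{itemize}
\item On the outcome $y\prec z$, the slave lies outside the interval from $x$ to $y$. Transposing $x$ and $y$ then injects this class into the extensions of $P+yx$.
\item On the outcome $z\prec y$, each $w$ strictly between $z$ and $y$ is also incomparable to $z$, since $w>z$ would give $w>x$. So transposing $z$ and $y$ maps this class injectively into the class $x\prec y\prec z$.
\item Composing the two transpositions bounds each outcome by $\ext(P+yx)=t_0-t_1$.
\end{itemize}
The case of a slave $z<y$, $z\parallel x$ is dual, with $x$ compared to $z$ second.
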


The elements in condition (2) need not be pairwise incomparable.  Peczarski's
separate cyclic-triple screen is subsumed by the exhaustive search for
condition (2).  In each case the proof supplies two comparisons fixed before
their outcomes are known, as required by the conjecture.

The verifier also accepts a half-balanced incomparable pair $(x,y)$, meaning
\[
  2\ext(P+xy)=2\ext(P+yx)=\ext(P).
\]
Indeed, use this pair as the first comparison.  Then $t_1=t_0/2$, while any
fixed second comparison leaves at most $t_1$ extensions; hence
$t_0\geq t_1+t_2$.  This direct argument is also recorded by
Peczarski~\cite{peczarski-comments}.

There is a useful inexpensive screen for condition (1).  If both orientations
of an incomparable pair have at most one slave, one of $\ext(P+xy)$ and
$\ext(P+yx)$ is at least $\ext(P)/2$, so condition (1) holds without counting
either quantity.  The verifier applies this bilateral screen first.  It then
searches for a low-slave pair or a half-balanced pair, and finally for a
triple satisfying condition (2).  The division between the first two
certificate counters depends on search order; their union does not, and only
the union is reported.

Candidate pairs are tried in a simple structural order and candidate triples
by decreasing right-hand bound in Lemma~\ref{lem:certificates}.  In the
production classifier two configurations need no new recurrence:
$\ext(P+xy+yz)=0$ when $z<x$, and when $x<z$ the positions of $y$ relative to
$x<z$ partition the extensions, giving
$\ext(P)-\ext(P+yx)-\ext(P+zy)$.

\subsection{The order-14 decision}\label{sec:gpcresult}

\begin{table}[ht]
\centering
\begin{tabular}{lrrr}
\toprule
 & $n=12$ & $n=13$ & $n=14$\\
\midrule
isomorphism classes
 & 1{,}104{,}891{,}746
 & 33{,}823{,}827{,}452
 & 1{,}338{,}193{,}159{,}771\\
chain
 & 1 & 1 & 1\\
pair certificate
 & 1{,}066{,}006{,}204
 & 32{,}418{,}324{,}910
 & 1{,}272{,}077{,}147{,}789\\
balanced-triple certificate
 & 38{,}885{,}541
 & 1{,}405{,}502{,}541
 & 66{,}116{,}011{,}981\\
open
 & 0 & 0 & 0\\
\bottomrule
\end{tabular}
\caption{Gold Partition certificate aggregates under a pair-first
classification.  ``Pair certificate'' means that a low-slave or half-balanced
pair exists; ``balanced-triple certificate'' means that no pair certificate
exists and a triple does.  For each order the four rows partition the total.}
\label{tab:gpc}
\end{table}

The order-$14$ aggregate contains all $4{,}096$ residues and agrees with the
published number of unlabeled posets, first obtained by Heitzig and Reinhold
and independently confirmed by Brinkmann and McKay
\cite{heitzig-reinhold,brinkmann-mckay}.  Exactly one class is a chain, the
certificate counts partition the total, and the open count is zero, which is
Theorem~\ref{thm:main}.  The known total checks enumeration completeness; the
partition and shard checks are internal consistency tests; the zero open count
is the computational conclusion.  The released aggregator was rerun on the
preserved $4{,}096$-shard archive, and the same executable reproduces the
known class totals at orders $10$ through $13$ together with the certificate
counts of an earlier run.

The share of classes needing the weaker balanced-triple certificate rises
steadily: $2.65$, $3.02$, $3.52$, $4.16$, and $4.94$ percent at orders $10$
through $14$.  Peczarski observes that his Table I might suggest almost all
posets fall to the low-slave condition and shows this is false: by a zero-one
law the fraction of posets satisfying its hypothesis tends to $0$
\cite[p.~94]{peczarski-gold}.  The five measured orders are consistent with
that result, but the reported pair category also includes half-balanced pairs,
so it does not by itself imply that the balanced-triple share must rise.  How
that share behaves, and whether the certificate scheme stays practical at
orders we cannot reach, the data does not say.

The order-$14$ calculation took $1{,}694$ user core-hours over a $19$-hour
shard span on AMD EPYC 7B13 processors; the census took $5{,}103$ user
core-hours over a $53$-hour shard span on the same hardware, the difference
being that a census cannot stop at a certificate.  In a controlled ablation on
the production platform, removing only the bilateral early exit increased
median user time by factors of $4.25$, $4.81$, $5.57$, and $8.55$ at orders
$10$ through $13$, respectively.  The first figure is a complete pass; the others
aggregate the same three fixed modulus shards over alternating repetitions.
The baseline binary is the production binary, and the total, pair-certificate
union, balanced-triple count, and open count agreed in every comparison.  This
measures the screening step within the present implementation, not a speedup
over the programs of Peczarski or De Loof et al.

\section{Where the extremal posets sit}\label{sec:meet}

Sections~\ref{sec:balance} and~\ref{sec:cycles} produce two families of
extremal objects; Section~\ref{sec:certificates} partitions every class by the
certificate that resolves it.  Neither program records what the other
measures, so we classified the recorded extremal witnesses with an independent
implementation of Lemma~\ref{lem:certificates} (Section~\ref{sec:verify}).

These are the witness sets the census retains --- one per distinct extremal
value, one per cycle length per shard --- not samples drawn from the classes,
and they cannot estimate class-wide rates.  How much that matters is visible
directly: of the $42$ one-per-value low-tail witnesses $21$ have no pair
certificate, while of the $180$ distinct low-tail witnesses retained across
all shards $64$ also have no pair certificate.  The same property, two
retention rules, $50\%$ against $35.6\%$.  For scale only, $4.94\%$ of all
order-$14$ classes need the balanced triple (Table~\ref{tab:gpc}); that
exhaustive class rate is not commensurable with either witness fraction.

The two exhaustive runs do give one population-level intersection.  The
twelve low-tail values first attained at order $14$ contain all $17$
non-ordinal-sum classes in the recorded tail: the $14$ classes at the nine
lower values require a balanced triple, while the three classes at the three
higher values have a bilateral pair certificate.  Each value is one complete
duality orbit, so classifying its representative and dual exhausts the
attainment set.

The following statements concern named retained witnesses and do not infer
population rates.
$L_{14,1,9}$, the worst-balanced non-sum poset of Theorem~\ref{thm:ladder},
has \emph{no} pair certificate and is resolved only by a balanced triple.
Every one of the $37$ recorded posets carrying a cycle of length $7$ or $8$
has balance $1/2$ and hence a half-balanced pair certificate.  In addition,
all $37$ are caught by the bilateral screen, which needs both orientations to
have at most one slave; this stronger fact is observed rather than forced.
And the families separate by width:
every retained low-tail witness has width $2$ or $3$ and every equality
representative width exactly $2$, against $7$ to $10$ for the long-cycle
witnesses.  Every equality-family class also has a bilateral certificate:
in a $T=\{a<b,c\}$ summand the ordered pair $(a,c)$ has the sole slave $b$,
while $(c,a)$ has none, and every other ordinal summand is comparable with
both elements.

\begin{observation}\label{obs:meet}
Every recorded order-$14$ poset carrying a majority cycle has balance constant
at least $\MINCYCBAL$, while the low tail is bounded above by $9/25$.
\end{observation}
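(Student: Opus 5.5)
This is a finite computational statement about two recorded witness sets, so I would prove it by direct recomputation, splitting it into its two halves.

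\emph{Cycle half.} The recorded cycle-carrying posets are the $\NWITNESS$ distinct witnesses the census retains, one per cycle length per shard. For each encoding I would rebuild the poset, compute $\ext(P)$ and every $\ext(P+xy)$ with the forward/backward ideal recurrences \eqref{eq:forward} and \eqref{eq:pairs}, and reduce to incomparable pairs within ordinal summands via \eqref{eq:sumbal}. I would then take $\bal(P)$ as an exact reduced fraction and form the minimum over all witnesses. That minimum should equal $2869/5810$, and I would name the attaining witness. The lengths $7$ and $8$ need no extra work, since Observation~\ref{obs:half} already gives $\bal=1/2$ there. Most retained witnesses also have balance $1/2$, so the real content lies in the few hundred or so length-$3$ to $6$ witnesses with balance below $1/2$. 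All comparisons are cross products of integers, so the check is exact. As an independent confirmation, I would rerun the evaluation with the Python \texttt{Fraction} checker.

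\emph{Tail half.} The bound for the low tail is immediate from the definition. The tail is the set of classes whose balance constant lies in the compiled window $(1/3,9/25]$, so each of them satisfies $\bal\le 9/25=0.36$. I would also confirm that the census's tail table records nothing outside that window.

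Combining the two halves, $9/25<2869/5810$ because $9\cdot5810=52290<72000=25\cdot 2869$. So the two families are separated by an explicit margin, which is the point of the observation.

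The main obstacle is not arithmetic but coverage. The statement quantifies only over \emph{recorded} witnesses, so I must make sure the witness list is exactly what the aggregator retained, deduplicated across shards and with duality representatives handled. Otherwise the minimum could be taken over the wrong set. I would check this by recomputing the list's cardinality against $\NWITNESS$ and verifying that each witness really carries a cycle of its claimed length, using an exact cycle search on $\Dig(P)$. I would also state explicitly that no claim is made about unrecorded cycle-carrying classes.
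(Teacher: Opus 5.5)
Your proposal is correct and matches the paper's own basis for the observation: exact recomputation of $\bal(P)$ for the $44{,}013$ distinct retained cycle witnesses from their strings, together with the fact that the low tail is by definition the compiled window $(1/3,9/25]$. Two slips do not affect the argument: $25\cdot 2869=71725$, not $72000$, though $52290<71725$ still gives the separation; and the paper's own percentages put roughly $19{,}000$ of the length-$3$ to $6$ witness slots below balance $1/2$, not a few hundred, so the full sweep you describe is genuinely needed.
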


The disjointness alone is weaker than the observed balance separation, and no
overlap probability can be inferred because the witnesses are retained
deterministically rather than sampled from the classes.  What
Observation~\ref{obs:meet} adds is that the separation is not marginal --- the
two families sit at opposite ends of the balance range, one pressed against
$1/3$ and the other against $1/2$.
Settling any of this over all classes would need a pass recording both
statistics together; the existing shards store marginals and selected
witnesses, so it cannot be done retrospectively.

\section{Verification}\label{sec:verify}

A witness is written \texttt{n:xxxx\ldots}, the order followed by $n$
four-digit hexadecimal upper-set masks: bit $y$ of mask $x$ is set exactly
when $x<y$ in the strict transitive closure.  The decoder validates
irreflexivity, antisymmetry, and transitive closure before use, so a malformed
or non-transitive string is rejected rather than silently interpreted.

We separate three kinds of check, because they carry very different weight.

\paragraph{External ground truth.}
Both runs' class totals agree exactly with the published number of unlabeled
posets on $14$ points \cite{heitzig-reinhold,brinkmann-mckay}.  A single
missing, duplicated, or truncated shard would shift a total by roughly
$10^{8}$, so digit-exact agreement is a strong check on enumeration
completeness and on the disjointness of the residue partition.  The census
cycle counts at orders $9$ through $13$ agree exactly with
\cite{deloof-census}, and the ladder values with Table 1 of
\cite{peczarski-ladders}.

Two further external checks are available and both pass.  First, Peczarski's
Table I \cite{peczarski-gold} supplies an external regression for the
certificate logic.  It gives, for $4\leq n\leq11$, the number of non-chain
ordinal-indecomposable
classes with dual pairs identified together with the number resolved by each
criterion.  Rebuilding that universe and classifying it with an independent
implementation of Lemma~\ref{lem:certificates} reproduces both his class
counts and his pair/triple split exactly at every order our independent
generator reaches, which is $4\leq n\leq8$.

At orders $4$ through $8$ he records respectively $6$, $21$, $111$, $725$,
and $6{,}474$ classes.  Their pair-certificate counts are $6$, $19$, $105$,
$704$, and $6{,}322$, while the balanced-triple counts are $0$, $2$, $6$,
$21$, and $152$; the independent classification returns every number exactly.

This is the only check that tests the certificate conditions against an
external source rather than against the enumeration, and it is the check the
order-$14$ decision most needs, since $\mathrm{open}=0$ is otherwise
self-reported.

Second, the census's count of ordinal-indecomposable classes is externally
determined.  Ordinal decomposition into indecomposable summands is unique, so
if $A(x)=\sum_n a(n)x^n$ with $a(0)=1$ and $a(n)$ the number of unlabeled
$n$-element posets, and $B(x)$ is the corresponding series for the
indecomposables, then $A=1/(1-B)$.  Inverting against the published totals
\cite{heitzig-reinhold,brinkmann-mckay} predicts $1{,}269{,}310{,}589{,}336$
indecomposable classes at order $14$ and $31{,}559{,}446{,}774$ at order $13$;
the census counts exactly those.  This validates the ordinal-sum decomposition
on which Theorem~\ref{thm:ladder} depends and which nothing else in the
pipeline tests.

The order-$14$ extremal data itself has, by construction, nothing external to
be checked against.

\paragraph{Independent recomputation.}
Every extremal witness is recomputed from its witness string alone by a
program sharing no algorithm with the census: where the census multiplies
prefix and suffix counts over the ideal lattice as in \eqref{eq:pairs}, the
checker counts, for each ordered pair separately, the linear extensions with
that pair's relation adjoined, by a plain forward recurrence.  At orders $5$
through $9$ the census is compared on every class against a brute-force
reference that enumerates linear extensions directly.

For the Gold Partition pass, a Python program generates every unlabeled poset
through order $7$ and applies the certificate definitions to explicitly
enumerated extensions, and a second implementation using a plain subset
recurrence is compared with the released classifier on every poset at orders
$8$ and $9$, including non-topological labelings.  Neither shares
extension-counting or certificate code with the classifier.  No independent
per-poset check is made above order $9$.

\paragraph{Internal consistency.}
For each run: the residues are disjoint and covering; every shard carries
exactly one summary and one parameter line; every shard records the same
binary hash; the census payloads match the SHA-256 sealed into their metadata
before publication; the census equality, tail, and cycle-budget counters report
no overflow; the certificate counters partition the total; and exactly one
class is a chain.  These checks would not detect a classifier that is
consistently wrong, which is why they are listed last.

\paragraph{Provenance.}
Each production binary hash is conditional on the recorded native build, GCC
13.3.0 on AMD EPYC 7B13 with \texttt{-march=native}: independent builds on two
hosts of that model are byte-identical, which does not generalise to other
silicon.  A portable build is provided and produces identical output, as do
GCC 13, 14, and 16 at orders $10$ and $11$.

\section{Open questions}\label{sec:open}

\begin{question}\label{q:relations}
Do $\Dig(P)$ and $\Dig_{\mathrm{inc}}(P)$ have the same cycle spectrum for
every finite poset?  Proposition~\ref{prop:three} settles length $3$;
Observation~\ref{obs:relations} is the evidence for the rest.
\end{question}

\begin{question}
Do posets whose majority digraph has bipartite cyclic part admit a
characterisation?  Observation~\ref{obs:even} gives a sufficient condition for
the absence of odd Condorcet cycles, and only a sufficient one: the converse
fails already at order $14$, where retained witnesses with cycle spectrum
exactly $\{4\}$ have a cyclic part that is not bipartite.  An undirected odd
cycle need not be consistently oriented, so it need not carry a directed one.
\end{question}

\begin{question}
Does Peczarski's broken-rung conjecture hold for all $n\geq9$?
\end{question}

\begin{question}
Is the family of Proposition~\ref{prop:equality} the complete equality locus
of the $1/3$--$2/3$ Conjecture at every order?
\end{question}

De Loof et al.\ also study cycle-free cut levels, which require repeated cycle
detection at many thresholds.  We did not compute them; it is the natural
follow-up to this census.

\section{Reproducibility}

The manuscript, source, tests, and compact aggregate data are available at
\[
  \text{\url{https://github.com/agupta/gold-partition-conjecture}}.
\]
The compact aggregate reports for both runs are included in the repository
under \texttt{data/}.  The complete order-$14$ shard archives, their checksum
manifests, and the build records are archived on Zenodo.  The concept DOI
\href{https://doi.org/10.5281/zenodo.21576029}{10.5281/zenodo.21576029}
resolves to the latest version; readers checking the data behind \emph{this}
paper should use the version DOI recorded in \texttt{data/census-n14.txt},
since the concept DOI will move when a later version is deposited.
The shard archive retains witness strings for every extremum and tail value,
every equality representative, and one witness per cycle length per shard.
The compact aggregate enriches its global extrema with canonical and Hasse
\texttt{digraph6} encodings.  Witness-specific claims can be recomputed from
those strings; global counts, completeness, and the equality classification
are checked by the aggregator over the complete sealed shard set.  The
retained-witness statistics are recomputed by
\texttt{scripts/analyze\_witness\_archive.py}.  The figures are generated
from fixed witness strings by \texttt{scripts/make\_census\_figures.py},
which re-verifies the asserted properties before drawing them.  Each archive
includes the source files used to build the corresponding production binary.

The calculations used \texttt{genposetg} 1.1 from nauty 2.9.1 and native GCC
13.3.0 builds on AMD EPYC 7B13 processors.  The repository build is portable
by default.  Tests of the shard protocol and aggregation rules are included
with the source, and the \texttt{make check} target runs the independent and
differential small-order checks, aggregation tests, and an end-to-end sharding
test.

An earlier version of this paper reported the Gold Partition result alone,
without the census of Sections~\ref{sec:balance}--\ref{sec:cycles}.

\paragraph{Acknowledgment of generative-AI assistance.}
Anthropic Claude Code (Claude 5 family) and OpenAI Codex (GPT-5.6 family) were
used extensively for software development and optimization, computational
experiment design, literature discovery, and drafting and editing the
manuscript.  The author selected the arguments and methods, checked the cited
sources and reported computations, and takes full responsibility for the
content.


\begin{thebibliography}{99}

\bibitem{aigner}
M.~Aigner,
\emph{A note on merging},
Order \textbf{2} (1985), 257--264.
\href{https://doi.org/10.1007/BF00333131}{doi:10.1007/BF00333131}.

\bibitem{brightwell}
G.~R. Brightwell,
\emph{Balanced pairs in partial orders},
Discrete Math. \textbf{201} (1999), 25--52.
\href{https://doi.org/10.1016/S0012-365X(98)00311-2}
{doi:10.1016/S0012-365X(98)00311-2}.

\bibitem{aires-kahn}
M.~Aires and J.~Kahn,
\emph{Balancing extensions in posets of large width},
\href{https://doi.org/10.48550/arXiv.2509.11549}
{arXiv:2509.11549} (2025).

\bibitem{aires-kahn-bounded}
M.~Aires and J.~Kahn,
\emph{Variance vs.\ range for linear extensions, and balancing extensions in
posets of bounded width},
\href{https://doi.org/10.48550/arXiv.2510.26134}
{arXiv:2510.26134} (2025).

\bibitem{bft}
G.~R. Brightwell, S.~Felsner, and W.~T. Trotter,
\emph{Balancing pairs and the cross product conjecture},
Order \textbf{12} (1995), 327--349.
\href{https://doi.org/10.1007/BF01110378}{doi:10.1007/BF01110378}.

\bibitem{brinkmann-mckay}
G.~Brinkmann and B.~D. McKay,
\emph{Posets on up to 16 points},
Order \textbf{19} (2002), 147--179.
\href{https://doi.org/10.1023/A:1016543307592}
{doi:10.1023/A:1016543307592}.

\bibitem{chan-pak}
S.~H. Chan and I.~Pak,
\emph{Linear extensions of finite posets},
EMS Surv. Math. Sci. (2025), published online first.
\href{https://doi.org/10.4171/EMSS/97}{doi:10.4171/EMSS/97}.

\bibitem{chen}
E.~Chen,
\emph{A family of partially ordered sets with small balance constant},
Electron. J. Combin. \textbf{25} (2018), \#P4.43.
\href{https://doi.org/10.37236/7337}{doi:10.37236/7337}.

\bibitem{deloof-ideals}
K.~De Loof, H.~De Meyer, and B.~De Baets,
\emph{Exploiting the lattice of ideals representation of a poset},
Fund. Inform. \textbf{71} (2006), 309--321.
\href{https://doi.org/10.3233/FUN-2006-712-309}
{doi:10.3233/FUN-2006-712-309}.

\bibitem{deloof-census}
K.~De Loof, B.~De Baets, and H.~De Meyer,
\emph{Counting linear extension majority cycles in partially ordered sets
on up to 13 elements},
Comput. Math. Appl. \textbf{59} (2010), 1541--1547.
\href{https://doi.org/10.1016/j.camwa.2009.12.021}
{doi:10.1016/j.camwa.2009.12.021}.

\bibitem{dolores-cuenca}
E.~R. Dolores-Cuenca, A.~Guzm\'an-S\'aenz, and S.~Kim,
\emph{The Gold Partition Conjecture and the lexicographic sum of posets},
\href{https://doi.org/10.48550/arXiv.2410.12494}
{arXiv:2410.12494} (2024).

\bibitem{fredman}
M.~L. Fredman,
\emph{How good is the information theory bound in sorting?},
Theoret. Comput. Sci. \textbf{1} (1976), 355--361.

\bibitem{fishburn-cycles}
P.~C. Fishburn,
\emph{On the family of linear extensions of a partial order},
J. Combin. Theory Ser. B \textbf{17} (1974), 240--243.
\href{https://doi.org/10.1016/0095-8956(74)90030-6}
{doi:10.1016/0095-8956(74)90030-6}.

\bibitem{fishburn-lem}
P.~C. Fishburn,
\emph{On linear extension majority graphs of partial orders},
J. Combin. Theory Ser. B \textbf{21} (1976), 65--70.
\href{https://doi.org/10.1016/0095-8956(76)90028-9}
{doi:10.1016/0095-8956(76)90028-9}.

\bibitem{gehrlein-fishburn}
W.~V. Gehrlein and P.~C. Fishburn,
\emph{Linear extension majority cycles for small ($n\leq9$) partial orders},
Comput. Math. Appl. \textbf{20} (1990), 41--44.
\href{https://doi.org/10.1016/0898-1221(90)90239-G}
{doi:10.1016/0898-1221(90)90239-G}.

\bibitem{heitzig-reinhold}
J.~Heitzig and J.~Reinhold,
\emph{The number of unlabeled orders on fourteen elements},
Order \textbf{17} (2000), 333--341.
\href{https://doi.org/10.1023/A:1006431609027}
{doi:10.1023/A:1006431609027}.

\bibitem{kahn-saks}
J.~Kahn and M.~Saks,
\emph{Balancing poset extensions},
Order \textbf{1} (1984), 113--126.
\href{https://doi.org/10.1007/BF00565647}{doi:10.1007/BF00565647}.

\bibitem{kislitsyn}
S.~S. Kislitsyn,
\emph{Finite partially ordered sets and their associated sets of
permutations},
Mat. Zametki \textbf{4} (1968), 511--518.

\bibitem{linial}
N.~Linial,
\emph{The information-theoretic bound is good for merging},
SIAM J. Comput. \textbf{13} (1984), 795--801.

\bibitem{mckay-piperno}
B.~D. McKay and A.~Piperno,
\emph{Practical graph isomorphism, II},
J. Symbolic Comput. \textbf{60} (2014), 94--112.
\href{https://doi.org/10.1016/j.jsc.2013.09.003}
{doi:10.1016/j.jsc.2013.09.003}.

\bibitem{olson-sagan}
E.~J. Olson and B.~E. Sagan,
\emph{On the $1/3$--$2/3$ Conjecture},
Order \textbf{35} (2018), 581--596.
\href{https://doi.org/10.1007/s11083-017-9450-3}
{doi:10.1007/s11083-017-9450-3}.

\bibitem{peczarski-gold}
M.~Peczarski,
\emph{The Gold Partition Conjecture},
Order \textbf{23} (2006), 89--95.
\href{https://doi.org/10.1007/s11083-006-9033-1}
{doi:10.1007/s11083-006-9033-1}.

\bibitem{peczarski-thin}
M.~Peczarski,
\emph{The Gold Partition Conjecture for 6-thin posets},
Order \textbf{25} (2008), 91--103.
\href{https://doi.org/10.1007/s11083-008-9081-9}
{doi:10.1007/s11083-008-9081-9}.

\bibitem{peczarski-comments}
M.~Peczarski,
\emph{Comments on the Golden Partition Conjecture},
Contrib. Discrete Math. \textbf{12} (2017), 106--109.
\href{https://doi.org/10.55016/ojs/cdm.v12i1.62349}
{doi:10.55016/ojs/cdm.v12i1.62349}.

\bibitem{peczarski-ladders}
M.~Peczarski,
\emph{The worst balanced partially ordered sets---ladders with broken rungs},
Exp. Math. \textbf{28} (2019), 181--184.
\href{https://doi.org/10.1080/10586458.2017.1368050}
{doi:10.1080/10586458.2017.1368050}.

\bibitem{sah}
A.~Sah,
\emph{Improving the $1/3$--$2/3$ conjecture for width two posets},
Combinatorica \textbf{41} (2021), 99--126.
\href{https://doi.org/10.1007/s00493-020-4091-3}
{doi:10.1007/s00493-020-4091-3}.

\end{thebibliography}
\end{document}